\def\2010mathclass#1{\par%
	\insert\footins{\footnotesize{\it\textup{2010} Mathematics
			Subject Classification:} #1}}
\numberwithin{equation}{section}
\newtheorem{Theorem}{Theorem}[section]
\newtheorem{Lemma}{Lemma}[section]
\newtheorem{assum}{Assumption} [section]
\newtheorem{Remark}{Remark}[section]
\begin{document}
\title[Zaremba boundary condition]{Semi-classical defect measure and internal stabilization for the semilinear wave equation subject to Zaremba
boundary conditions}

\author[Cavalcanti]{Marcelo M. Cavalcanti}
\address{ Department of Mathematics, State University of
Maring\'a, 87020-900, Maring\'a, PR, Brazil.}
\email{mmcavalcanti@uem.br}
\thanks{Research of Marcelo M. Cavalcanti partially supported by the CNPq Grant
300631/2003-0}

\author[Cornilleau]{ Pierre Cornilleau}
\address{  Lyc\'ee Pothier, 2 bis, rue Marcel Proust, 45000 Orl\'eans , France.}
\email{: pierre.cornilleau@ens-lyon.org}

\author[Domingos Cavalcanti]{Val\'eria N. Domingos Cavalcanti}
\address{ Department of Mathematics, State University of Maring\'a,
87020-900, Maring\'a, PR, Brazil.}
\email{vndcavalcanti@uem.br}
\thanks{Research of Val\'eria N. Domingos Cavalcanti partially supported by the CNPq Grant
304895/2003-2}

\author[Robbiano]{Luc Robbiano}
\address{Laboratoire de Math\'ematiques de Versalilles, Universit\'e de Versailles
St Quentin, CNRS, 45, Avenue des Etats-Unis, 78035 Versailles, France.}
\email{Luc.Robbiano@uvsq.fr}

\keywords{Wave equation, Zaremba boundary conditions, microlocal analysis}

\2010mathclass{35L05, 35l20, 35B40.}%

\maketitle

\begin{abstract}
In this article we exploite the uniform decay for damped linear wave equation with Zaremba 
boundary condition, obtained in a previous work, to treat the same problem in nonlinear context. 
We need a uniqueness assumption, usual for this type of nonlinear problem. The result is deduced from 
an observation estimate for nonlinear problem proved by a contradiction argument.
\end{abstract}

\maketitle

\tableofcontents

\section{Introduction}

\subsection{Description of the Problem.}

This article is devoted to the analysis of the exponential and uniform decay rates
of solutions to the wave equation subject to a localized frictional damping and Zaremba boundary conditions:
\begin{equation}\label{eq:*}
\left\{
\begin{aligned}
&{\partial_t^2 u -\Delta u + f(u) + a(x) \partial_t u = 0\quad \hbox{in}\,\,\,\Omega \times (0, +\infty),}\\\
&{u=0\quad \hbox{on}\quad \partial \Omega_D \times (0,+\infty ),}\\\
&{\partial_\nu u=0\quad \hbox{on}\quad \partial \Omega_N \times (0,+\infty ),}\\\
&{u(x,0)=u_0(x);\quad \partial_t u(x,0)=u_1(x),\quad x\in\Omega,}
\end{aligned}
\right.
\end{equation}
where $\Omega$ is a bounded domain of $\mathbb{R}^n ,$ $n\geq 1$, with smooth boundary $\partial \Omega=\partial \Omega_D \cup \partial \Omega_N $,~ $\partial \Omega_D \cap \partial \Omega_N =\emptyset$, $meas(\partial \Omega_D)\ne 0$, $meas(\partial \Omega_N)\ne 0$, $f:\mathbb{R}\rightarrow \mathbb{R}$ is a $C^2$ function with sub-critical growth which satisfies the sign condition $f(s) s \geq 0$, for all $s\in \mathbb{R}$ (see further assumptions \eqref{ass on f} and \eqref{ass on F}). Here, $M:=(\overline{\Omega}, G)$ is a compact Riemannian manifold where we are inducing on $\Omega$ a Riemannian metric $G$, $\nabla\equiv \nabla_G$ is the associated  Levi-Civita connection and $\Delta$ represents the Laplace Beltrami operator.

The following assumptions are made on the function $a(x)$, responsible for the localized dissipative effect of frictional type:
\begin{assum}\label{ass 1}
We assume that $a(\cdot) \in L^\infty(\Omega)$ is a nonnegative function. In addition, that $\omega$ geometrically controls $\Omega$, i.e there exists $T_{0} >0$, such that every geodesic of the metric $G$, travelling with speed $1$ and issued at $t = 0$, enters the set $\omega$ in a time $t <  T_{0}$.

Furthermore,
\begin{eqnarray}\label{damp term}
a(x) \geq a_0 >0 \hbox{ a. e. in } \omega.
\end{eqnarray}
\end{assum}

Setting $$H^1_{\partial \Omega_D}(\Omega):= \{u \in H^1(\Omega): u=0 \hbox{ on }\partial \Omega_D\}$$ endowed, thanks to Poincar\'e inequality, with its natural topology $$||u||_{H^1_{\partial \Omega_D}(\Omega)}^2:= \int_\Omega |\nabla u|^2 \,dx,$$ let also assume the following unique continuation principle holds:
\begin{assum}\label{UCP}
     For every $T > 0$, the only solution $v$ lying in the space $C(]0,T[; L^2(\Omega)) \cap C(]0,T[,H_{\partial \Omega_D}^{-1}(\Omega))$, to system
     \begin{equation}
     \left\{
     \begin{aligned}
     & \partial_{t}^2 v - \Delta v + V(x,t)v =0\hbox{ in } \Omega\times (0,T),\\
     &v=0\hbox{ on } \omega,
     \end{aligned}
     \right.
     \end{equation}
     where $V(x, t)  \in L^{\infty}(\Omega \times (0,T)))$, is the trivial
one $v \equiv 0$. Here,$H_{\partial \Omega_D}^{-1}(\Omega)=\left[H^1_{\partial \Omega_D}(\Omega)\right]'$.
\end{assum}

\subsection{Previous Results, Main Goal and Methodology.}

The contribution of the present paper is to introduce a new and a more general approach to obtain the exponential stability of problem (\ref{eq:*}), which generalizes the previous results, and, in addition, can be used for other equations as well regardless of the type of dissipation mechanism considered.
In order to obtain the desired stability result for the wave equation subject to a frictional damping, we consider an approximate problem and we show that its solution decays exponentially to zero in the weak phase space. The method of proof combines an observability inequality, microlocal analysis tools and unique continuation properties. Then, passing to the limit, we recover the original model and prove its global existence as well as the exponential stability.
\medskip

In what follows we are going to explain briefly the methodology we are going to use.

\medskip

Setting
\begin{eqnarray*}
 D(-\Delta):=\{v\in H^1_{\partial \Omega_D}(\Omega): \Delta u \in L^2(\Omega) \},
\end{eqnarray*}
and denoting $v=u_{t}$ we may rewrite problem \eqref{eq:*} as the following Cauchy problem in $\mathcal{H}=H^1_{\partial \Omega_D}(\Omega) \times L^2(\Omega)$
\begin{equation}\label{cauchy problem}
\left\{
\begin{aligned}
&\frac{\partial}{\partial t}(u,v) = A(u,v) + \mathcal{F}(u,v) \\
\\
& (u,v)(0)= (u_{0}, v_{0}),
\end{aligned}
\right.
\end{equation}
where the linear unbounded operator $A: D(A) \rightarrow \mathcal{H}$ is given by
\begin{eqnarray}\label{A'}
A(u,v) = (v, \Delta u - a(x) v),
\end{eqnarray}
with domain
\begin{equation}\label{domain}
D(A) = D(-\Delta) \times H^1_{\partial \Omega_D}(\Omega),
\end{equation}
and $\mathcal{F}: \mathcal{H} \rightarrow \mathcal{H}$ is the nonlinear operator
\begin{equation}\label{F}
\mathcal{F}(u,v) = (0, -f(u)).
\end{equation}

 It is well known that the operator $A: D(A) \subset  \mathcal{H} \rightarrow \mathcal{H}$ defined by \eqref{A'} and \eqref{domain} generates a $C_{0}$-semigroup of contractions $e^{At}$ on the energy space $\mathcal{H}$ and $D(A)$ is dense in $\mathcal{H}$. For more details, see \cite{Pazy}.
Thus, given $\{u_0,u_1\} \in H^1_{\partial \Omega_D}(\Omega) \times L^2(\Omega)$, consider a sequence $\{u_{0,k},u_{1,k}\}\in D(A)$, satisfying
\begin{eqnarray}\label{conv init data}
\{u_{0,k},u_{1,k}\} \rightarrow \{u_0,u_1\} ~\hbox{ in } H^1_{\partial \Omega_D}(\Omega) \times L^2(\Omega).
\end{eqnarray}

 Thus, instead of studying problem (\ref{eq:*}) directly, we shall study, for each $k \in \mathbb{N}$, the auxiliary problem
\begin{equation}\label{eq:AP}
\left\{
\begin{aligned}
&{\partial_t^2 u_k -\Delta u_k + f_k(u_k) + a(x) \partial_t u_k= 0\quad \hbox{in}\,\,\,\Omega \times (0, +\infty),}\\\
&{u_k=0\quad \hbox{on}\quad \partial \Omega_D \times (0,+\infty ),}\\\
&{\partial_\nu u_k=0\quad \hbox{on}\quad \partial \Omega_N \times (0,+\infty ),}\\\
&{u_k(x,0)=u_{0,k}(x);\quad \partial_t u_k(x,0)=u_{1,k}(x),\quad x\in\Omega,}
\end{aligned}
\right.
\end{equation}
where $f_k: \mathbb{R} \longrightarrow \mathbb{R}$ is defined by
\begin{equation}\label{trunc func}
f_{k}(s):=
\begin{cases}
f(s),  &  |s|\leq k,\\
f(k),  & s> k,\\
f(-k), & s < -k.
\end{cases}
\end{equation}

Here, we use some ideas from Lasiecka and Tataru's work \cite{Lasiecka-Tataru} adapted to the present context.
The energy identity associated to problem (\ref{eq:AP}) is given by
\begin{eqnarray}\label{Trunc ener Ident}
E_{u_k}(t) + \int_0^t \int_\Omega a(x) |\partial_t u_k(x,s)|^2\,dxds = E_{u_k}(0), \hbox{ for all } t\in [0,+\infty) \hbox{ and } k\in \mathbb{N},
\end{eqnarray}
where
\begin{eqnarray}\label{Trunc ener}
E_{u_k}(t):= \frac12 \int_{\Omega} |\partial_t u_k(x,t)|^2 + |\nabla u_k(x,t)|^2\,dx + \int_\Omega F_k(u_k(x,t))\,dx,
\end{eqnarray}
with $F_k(s):= \int_0^s f_k(\lambda)\,d\lambda$. Furthermore, we will also prove the corresponding observability inequality to problem (\ref{eq:AP}), that is, we shall prove that there exists a positive constant $C$  which does not depend on $k$, verifying
\begin{equation}\label{Truc Obs Ineq}
E_{u_k}(0) \leq C\,\int_0^{T}\int_{\Omega} a(x) |\partial_t u_k|^2 \,dx\,dt,~\hbox{ for all } T\geq T_0.
\end{equation}

 Finally, passing to the limit in (\ref{Trunc ener Ident}) and (\ref{Truc Obs Ineq}) as $k\rightarrow +\infty$, we achieve the energy identity and  the observability inequality associated to problem (\ref{eq:*}), respectively, which are the necessary and sufficient ingredients to establish its exponential stability result. However, in order to established (\ref{Truc Obs Ineq}) we need two facts:~(i)~To prove the observability inequality associated to the linear problem:
 \begin{equation}\label{LP}
\left\{
\begin{aligned}
&{\partial_t^2 y -\Delta y = 0\quad \hbox{in}\,\,\,\Omega \times (0, T),}\\\
&{y=0\quad \hbox{on}\quad \partial \Omega_D \times (0, T),}\\\
&{\partial_\nu y=0\quad \hbox{on}\quad \partial \Omega_N \times (0, T),}\\\
&{y(x,0)=y_0(x)\in H^1_{\partial \Omega_D}(\Omega);\quad \partial_t y(x,0)=y_1(x)\in L^2(\Omega),\quad x\in\Omega,}
\end{aligned}
\right.
\end{equation}
namely, there exists a constant $c>0$ such that
\begin{eqnarray}\label{Linear Obs Ineq}
E_y^L(0) \leq c \int_0^T \int_\omega |\partial_t y(x,t)|^2\,dxdt,
\end{eqnarray}
 for all $(y_0,y_1) \in H^1_{\partial \Omega_D}(\Omega) \times L^2(\Omega)$, where $E_y^L(t):=  \frac12 \int_{\Omega} |\partial_t y(x,t)|^2 + |\nabla y(x,t)|^2\,dx\,dx$. (ii)~The second main ingredient in the proof is to consider the well known property which establishes the linear map $\{z_0, z_1, f\}\in H^1_{\partial_D}(\Omega) \times L^2(\Omega) \times L^1(0,T;L^2(\Omega))\mapsto \{z, \partial_t z\}\in L^\infty(0,T;H^1_{\partial \Omega_D}(\Omega)) \times L^\infty(0,T;L^2(\Omega))$ associated to problem
 \begin{equation*}
\left\{
\begin{aligned}
&{\partial_t^2 z -\Delta z = f\quad \hbox{in}\,\,\,\Omega \times (0, T),}\\\
&{z=0\quad \hbox{on}\quad \partial \Omega_D \times (0, T),}\\\
&{\partial_\nu z=0\quad \hbox{on}\quad \partial \Omega_N \times (0, T),}\\\
&{z(x,0)=z_0(x)\in H^1_{\partial \Omega_D}(\Omega);\quad \partial_t z(x,0)=z_1(x)\in L^2(\Omega),\quad x\in\Omega,}
\end{aligned}
\right.
\end{equation*}
is continuous, that is,
 \begin{eqnarray}\label{continuos map}
 &&||z||_{L^\infty(0,T;H^1_{\partial \Omega_D}(\Omega))}^2 + ||\partial_t z||_{L^\infty(0,T;L^2(\Omega))}^2\\
 &&\lesssim ||z_0||_{ H^1_{\partial \Omega_D}(\Omega)}^2 + ||z_1||_{L^\infty(0,T;L^2(\Omega))}^2 + ||f||_{L^1(0,T;L^2(\Omega))}^2.\nonumber
 \end{eqnarray}

 It is worth mentioning, according proved by Haraux \cite{Haraux}, the equivalence between the exponential decay of solutions to the second order evolution equation:
 \begin{equation}\label{DLP}
\left\{
\begin{aligned}
&{\partial_t^2 y -\Delta y + a(x) \partial_t y= 0\quad \hbox{in}\,\,\,\Omega \times (0, T),}\\\
&{y=0\quad \hbox{on}\quad \partial \Omega_D \times (0, T),}\\\
&{\partial_\nu y=0\quad \hbox{on}\quad \partial \Omega_N \times (0, T),}\\\
&{y(x,0)=y_0(x)\in H^1_{\partial \Omega_D}(\Omega);\quad \partial_t y(x,0)=y_1(x)\in L^2(\Omega),\quad x\in\Omega,}
\end{aligned}
\right.
\end{equation}
(uniformly on bounded sets of $\mathcal{H}$), and the `controllability property' given in (\ref{Linear Obs Ineq}) of the system governed by the undamped equation (\ref{LP}). As a consequence, instead of proving (\ref{Linear Obs Ineq}) it is sufficient to prove the exponential decay of weak solutions to problem (\ref{DLP}). In order to do that, refined arguments of microlocal analysis will be considered jointly with the characterization given by \cite{Huang} (Theorem 3), namely:
\begin{Theorem}[Gearhart–Pr\"uss–Huang]~ Let $e^{At}$ be a $C_0$-semigroup in a
Hilbert space $H$ and assume that there exists a positive constant $M >0$ such that
$|||e^{At}||| \leq M$ for all $t \geq 0$. Then $e^{At}$ is exponentially stable if and only if $i\mathbb{R} \subset \rho(A)$
and
\begin{eqnarray}\label{RE}
\sup_{\mu \in \mathbb{R}} ||| \left(A-i\mu I_d \right)^{-1}|||_{\mathcal{L(H)}}<+\infty.
\end{eqnarray}
\end{Theorem}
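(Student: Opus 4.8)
This is the classical Gearhart--Pr\"uss--Huang characterization, so the plan is to reproduce its two halves, the substantial one being sufficiency.

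\emph{Necessity} is elementary and I would dispatch it at once: if $\|e^{At}\|\le Ce^{-\omega t}$ with $\omega>0$, then the growth bound of the semigroup is negative, hence $\sigma(A)\subset\{\operatorname{Re}\lambda<0\}$ and in particular $i\mathbb R\subset\rho(A)$; moreover the Laplace representation $(\lambda-A)^{-1}=\int_0^{\infty}e^{-\lambda t}e^{At}\,dt$ is norm-convergent for $\operatorname{Re}\lambda\ge 0$ and gives $\|(\lambda-A)^{-1}\|\le\int_0^{\infty}\|e^{At}\|\,dt<\infty$ uniformly on the closed right half-plane, which restricted to the imaginary axis is precisely \eqref{RE}.

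\emph{Sufficiency} I would prove in three steps. First, from $M_0:=\sup_{\mu}\|(i\mu-A)^{-1}\|<\infty$ a Neumann-series argument (writing $\lambda-A=(i\operatorname{Im}\lambda-A)[I+\operatorname{Re}\lambda\,(i\operatorname{Im}\lambda-A)^{-1}]$) yields $\|(\lambda-A)^{-1}\|\le 2M_0$ whenever $|\operatorname{Re}\lambda|<1/(2M_0)$, and together with $\|(\lambda-A)^{-1}\|\le M/\operatorname{Re}\lambda$ for $\operatorname{Re}\lambda>0$ this produces a constant $K$ with $\|(\lambda-A)^{-1}\|\le K$ on a strip $\{\operatorname{Re}\lambda>-\delta\}$, $\delta>0$. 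Second --- the heart of the matter --- I would show that every orbit is square-integrable: for $x\in H$ and $\epsilon>0$ the function $u_\epsilon$ equal to $e^{-\epsilon t}e^{At}x$ for $t\ge 0$ and to $0$ for $t<0$ lies in $L^1\cap L^2\cap L^\infty(\mathbb R;H)$ with Fourier transform $\widehat{u_\epsilon}(s)=((\epsilon+is)-A)^{-1}x$, so \emph{Plancherel's theorem} gives $\int_0^{\infty}e^{-2\epsilon t}\|e^{At}x\|^2\,dt=\tfrac1{2\pi}\int_{\mathbb R}\|((\epsilon+is)-A)^{-1}x\|^2\,ds$; one then bounds the right-hand side by $C\|x\|^2$ uniformly in $\epsilon$, first for $x\in D(A^2)$, where the expansion $(\lambda-A)^{-1}x=\lambda^{-1}x+\lambda^{-2}Ax+\lambda^{-2}(\lambda-A)^{-1}A^2x$ furnishes the decay in $s$ needed for integrability near $s=\pm\infty$, and then removing the dependence on the graph norm of $x$ --- the delicate passage carried out in \cite{Huang} (and independently by Gearhart and by Pr\"uss). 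Letting $\epsilon\downarrow 0$ by monotone convergence yields $\int_0^{\infty}\|e^{At}x\|^2\,dt\le C\|x\|^2$ for every $x\in H$. Third, I would invoke Datko's argument: from $e^{At}x=e^{A(t-s)}e^{As}x$ and $\|e^{A(t-s)}\|\le M$ we get $t\,\|e^{At}x\|^2=\int_0^t\|e^{At}x\|^2\,ds\le M^2\int_0^t\|e^{As}x\|^2\,ds\le M^2C\|x\|^2$, hence $\|e^{At}\|\le M\sqrt{C/t}\to 0$, so $\|e^{At_0}\|<1$ for some $t_0$ and the semigroup law forces exponential decay.

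The main obstacle is clearly the second step: Plancherel's identity is where the Hilbert-space hypothesis is indispensable (the statement is genuinely false in general Banach spaces), and even here the uniform-in-$\epsilon$ control of $\int_{\mathbb R}\|((\epsilon+is)-A)^{-1}x\|^2\,ds$ by $\|x\|^2$ alone rather than by a graph norm is the technical crux; I would follow \cite{Huang} for it, or, alternatively, deform the Bromwich contour in the inverse Laplace representation of $e^{At}$ past the imaginary axis using the strip bound of the first step --- an equally technical route to the same conclusion.
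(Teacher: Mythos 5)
The paper offers no proof of this theorem at all---it is quoted verbatim as a known result with a pointer to \cite{Huang} (Theorem 3)---so there is nothing internal to compare against; your outline is the correct classical argument (Laplace-transform representation of the resolvent for necessity; Neumann-series strip bound, Plancherel's identity for $t\mapsto e^{-\epsilon t}e^{At}x$, and Datko's trick for sufficiency), and it is precisely the route taken in the cited source. The only point worth flagging is that the genuinely hard step---the uniform-in-$\epsilon$ bound of $\int_{\mathbb R}\|((\epsilon+is)-A)^{-1}x\|^2\,ds$ by $C\|x\|^2$ without the graph norm, which is exactly where the Hilbert-space hypothesis and the bound \eqref{RE} are consumed---is deferred to \cite{Huang} rather than carried out, so as written your text is an accurate roadmap rather than a self-contained proof.
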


 \section{Convergence of the auxiliary problem}
\setcounter{equation}{0}

\medskip
\subsection{The limit process.}
\medskip

In this section we prove that the sequence $\{u_k\}_{k\in \mathbb{N}}$ of solutions to problem (\ref{eq:AP}) converges to the unique solution to the problem (\ref{eq:*}).

The function $f$ satisfies the following hypotheses:
\begin{assum}\label{assumption 2.1}
$f:\mathbb{R}\rightarrow \mathbb{R}$ is a $C^2$ function with sub-critical growth; satisfying the sign condition $f(s) s \geq 0$, for all $s\in \mathbb{R}$, and
\begin{eqnarray}\label{main ass on f}
f(0)=0, \ \ \ |f^{(j)}(s)| \leq k_{0}(1 + |s|)^{p-j}, \hbox{ for all } s \in \mathbb{R} \hbox{ and } j=1,2.
\end{eqnarray}
In particular, we obtain from (\ref{main ass on f}),
\begin{eqnarray}\label{ass on f}
|f(r) - f(s)| \leq c \left(1 + |s|^{p-1} + |r|^{p-1} \right)|r-s|,~\hbox{ for all } s,r\in \mathbb{R},
\end{eqnarray}
for some $c>0$, with
\begin{eqnarray}\label{ass on f'}
1\leq p \leq \frac{n+2}{n-2}~\hbox{ if }n\geq 3~\hbox{ or } ~p\geq 1~\hbox{ if }~ n=1,2.
\end{eqnarray}

In addition,
\begin{eqnarray}\label{ass on F}
0 \leq F(s) \leq f(s) s,~\hbox{ for all } s\in \mathbb{R},
\end{eqnarray}
where $F(\lambda):= \int_0^\lambda f(s)\,ds$.
\medskip
\end{assum}

We begin with some preliminary results.
\begin{Lemma}\label{Lemma1}
The distributional derivative $f_k'$ of the function defined in (\ref{trunc func}) is the essentially bounded function $g_k:\mathbb{R} \rightarrow \mathbb{R}$ given by
\begin{equation}\label{drrivative}
g_k(s):=
\begin{cases}
f'(s), & |s|\leq k,\\
0, & s> k,\\
0, & s < -k.
\end{cases}
\end{equation}
\end{Lemma}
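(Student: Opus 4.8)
The plan is a direct verification from the definition of the distributional derivative, so this will be elementary. First I would record the structural facts about $f_k$ that make everything work: since $f\in C^2(\mathbb{R})$, the function $f_k$ defined in \eqref{trunc func} is continuous on all of $\mathbb{R}$ (the three branches agree at $s=\pm k$ precisely because of the choice $f_k(\pm k)=f(\pm k)$), it is $C^1$ on each of the closed pieces $(-\infty,-k]$, $[-k,k]$, $[k,+\infty)$, and its classical derivative — which exists at every point except possibly $s=\pm k$ — coincides with $g_k$ and is bounded by $\sup_{|s|\le k}|f'(s)|<+\infty$, a finite quantity since $f'$ is continuous on the compact interval $[-k,k]$ (one may alternatively invoke \eqref{main ass on f} with $j=1$). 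In particular $g_k\in L^\infty(\mathbb{R})$, which already yields the claimed essential boundedness.

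It then remains to check that $g_k$ is the distributional derivative of $f_k$, i.e. that $\int_{\mathbb{R}} f_k(s)\,\varphi'(s)\,ds=-\int_{\mathbb{R}} g_k(s)\,\varphi(s)\,ds$ for every test function $\varphi\in C_c^\infty(\mathbb{R})$. I would split the left-hand integral over the three intervals $(-\infty,-k)$, $(-k,k)$, $(k,+\infty)$. On the two unbounded intervals $f_k$ is constant, equal to $f(-k)$ and $f(k)$ respectively, so the fundamental theorem of calculus together with the compact support of $\varphi$ gives $\int_{-\infty}^{-k} f(-k)\varphi'(s)\,ds=f(-k)\varphi(-k)$ and $\int_{k}^{+\infty} f(k)\varphi'(s)\,ds=-f(k)\varphi(k)$. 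On the bounded interval, since $f\in C^1([-k,k])$, an ordinary integration by parts gives $\int_{-k}^{k} f(s)\varphi'(s)\,ds=f(k)\varphi(k)-f(-k)\varphi(-k)-\int_{-k}^{k} f'(s)\varphi(s)\,ds$.

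Adding the three contributions, the boundary terms cancel in pairs and one is left with $\int_{\mathbb{R}} f_k\,\varphi'\,ds=-\int_{-k}^{k} f'(s)\varphi(s)\,ds=-\int_{\mathbb{R}} g_k(s)\varphi(s)\,ds$, which is exactly the assertion. I do not expect any genuine obstacle here; the only point requiring a little care is making sure the boundary terms produced by the three separate integrations by parts telescope, which is precisely where the continuity of $f_k$ across $s=\pm k$ is used.
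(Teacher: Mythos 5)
Your proof is correct and follows essentially the same route as the paper: split $\int_{\mathbb{R}} f_k\varphi'$ over $(-\infty,-k)$, $(-k,k)$, $(k,+\infty)$, integrate by parts on the middle interval, and observe that the boundary terms cancel thanks to the continuity of $f_k$ at $s=\pm k$. Your additional remark justifying the essential boundedness of $g_k$ via the continuity of $f'$ on $[-k,k]$ is a small but welcome completion of a point the paper leaves implicit.
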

\begin{proof}
Take $\varphi \in C_0^\infty(\mathbb{R})$. Once $f_k\in L^1_{loc}(\mathbb{R})$ we have
\begin{eqnarray*}
&&\left<f_k',\varphi \right>_{\mathcal{D}'(\mathbb{R}), \mathcal{D}(\mathbb{R})}= - \int_{\mathbb{R}} f_k(s) \varphi'(s)\,ds\\
&&= -\left[\int_{-\infty}^{-k} f_k(s) \varphi'(s)\,ds + \int_{-k}^k f_k(s) \varphi'(s)\,ds  + \int_{k}^{+\infty} f_k(s) \varphi'(s)\,ds  \right]\\
&&= - \left[ f(-k) \varphi(-k)  + f(k) \varphi(k) - f(-k) \varphi(-k) - \int_{-k}^k f'(s) \varphi(s)\,ds - f(k)\varphi(k)\right]\\
&& =  \int_{-k}^k f'(s) \varphi(s)\,ds = \int_{\mathbb{R}} g(s) \varphi(s) \,ds.
\end{eqnarray*}
\end{proof}

Consider the following result which will be useful to the proof of Lemma \ref{Lema2}.
\begin{Theorem}\label{brezis}
	Let $u \in W^{1,p}(I)$ with $1 \leq p \leq \infty$, where $I$ is a bounded interval of $\mathbb{R}$. Then, there exists $\widetilde{u} \in C(\bar{I})$ such that
	\begin{equation*}
	u=\widetilde{u} \hbox{ a.e. in } I
	\end{equation*}
	and
	\begin{equation*}
	\widetilde{u}(x)-\widetilde{u}(y)=\int_{y}^{x} u'(t)dt \hbox{ for all } x, y \in \bar{I}.
	\end{equation*}
\end{Theorem}
\begin{proof}
See Brezis \cite{Brezis}, Theorem 8.2.
\end{proof}
\begin{Lemma}\label{Lema2}
For each $k\in \mathbb{N}$, there exists a positive constant $C_k$ verifying
\begin{equation*}
|f_k(r)-f_k(s)|\leq C_k|r-s| \hbox{ for every} r,s \in  \mathbb{R},
\end{equation*}
where $f_k$ is the function defined in \eqref{trunc func}.
\end{Lemma}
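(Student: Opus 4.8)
The statement asserts that each truncated nonlinearity $f_k$ is globally Lipschitz, and the natural Lipschitz constant is the sup of $|f'|$ on the truncation window. The idea is to combine Lemma \ref{Lemma1}, which identifies the distributional derivative $f_k'=g_k$, with Theorem \ref{brezis}, which upgrades a $W^{1,p}$ function to an absolutely continuous representative satisfying the fundamental theorem of calculus. The first step is to record that $g_k$ is genuinely (essentially) bounded: since $f$ is of class $C^2$, $f'$ is continuous, hence bounded on the compact interval $[-k,k]$, so
\begin{equation*}
\|g_k\|_{L^\infty(\mathbb{R})}=\sup_{|s|\le k}|f'(s)|=:C_k<+\infty,
\end{equation*}
the supremum outside $[-k,k]$ being $0$ by the definition \eqref{drrivative}. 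Note also that $f_k$ is bounded on $\mathbb{R}$ (continuous on $[-k,k]$, constant outside), so $f_k\in L^\infty_{\mathrm{loc}}(\mathbb{R})$.

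\textbf{Main step.} Fix $r,s\in\mathbb{R}$; without loss of generality $s\le r$. Choose any bounded open interval $I\subset\mathbb{R}$ containing both the points $r,s$ and the truncation window $[-k,k]$ (for instance $I=(-R,R)$ with $R>k+|r|+|s|$). On $I$ we have $f_k\in L^\infty(I)$ and, by Lemma \ref{Lemma1}, its weak derivative on $I$ equals $g_k\in L^\infty(I)$; hence $f_k\in W^{1,\infty}(I)\subset W^{1,1}(I)$. Theorem \ref{brezis} then provides $\widetilde{f_k}\in C(\bar I)$ with $\widetilde{f_k}=f_k$ a.e. on $I$ and
\begin{equation*}
\widetilde{f_k}(x)-\widetilde{f_k}(y)=\int_{y}^{x} g_k(t)\,dt,\qquad x,y\in\bar I .
\end{equation*}
Since $f_k$ is itself continuous on $\bar I$ and agrees with $\widetilde{f_k}$ almost everywhere, the two continuous functions coincide everywhere on $\bar I$; in particular at $r$ and $s$. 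Therefore
\begin{equation*}
|f_k(r)-f_k(s)|=\left|\int_{s}^{r} g_k(t)\,dt\right|\le \|g_k\|_{L^\infty(I)}\,|r-s|\le C_k\,|r-s|.
\end{equation*}
As $C_k=\sup_{|s|\le k}|f'(s)|$ is independent of the chosen $r,s$ (and of the auxiliary interval $I$), this proves the claim.

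\textbf{Where the care is needed.} There is no deep obstacle here; the only point that requires a little attention is that Theorem \ref{brezis} is stated on a \emph{bounded} interval, whereas the conclusion is a global Lipschitz estimate. This is handled exactly as above: for each pair $(r,s)$ one works on a large enough bounded interval containing both points together with $[-k,k]$, and then observes that the resulting constant $C_k$ does not depend on that interval, so the estimate is in fact uniform over all of $\mathbb{R}$. One also needs the (routine) observation that $f_k$ and $g_k$ are bounded, which is what makes $f_k\in W^{1,p}(I)$ and legitimizes the application of Theorem \ref{brezis}.
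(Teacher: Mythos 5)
Your proof is correct and follows essentially the same route as the paper: both identify the weak derivative $f_k'=g_k$ via Lemma \ref{Lemma1}, invoke Theorem \ref{brezis} to write $f_k(r)-f_k(s)$ as an integral of $g_k$, and bound it by $\sup_{[-k,k]}|f'|\cdot|r-s|$. Your version is in fact slightly more careful than the paper's (which applies Theorem \ref{brezis} directly on $I=]s,r[$), since you explicitly justify identifying $f_k$ with its continuous representative and note why $g_k$ is bounded.
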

\begin{proof}
Consider $s, r\in \mathbb{R}$ with $s< r$. Applying Theorem \ref{brezis} for $I=]s,r[$,  it follows that
\begin{eqnarray*}
f_k(r) - f_k(s) = \int_s^r f_k'(\xi)\,d\xi.
\end{eqnarray*}
Thus, Lemma \ref{Lemma1} yields the following inequality:
\begin{eqnarray}\label{Lipschitz}
|f_k(r) - f_k(s)| \leq \int_s^r |f_k'(\xi)|\,d\xi \leq \sup_{s\in [-k,k]}|g_k(s)|\,|r-s|,
\end{eqnarray}
which concludes the proof.
\end{proof}

From Lemma \ref{Lema2}, for each $k\in \mathbb{N}$, standard arguments of Semigroup theory yield that problem (\ref{eq:AP}) possesses an unique regular solution $u_k$ in the class
\begin{eqnarray*}
C^0([0,\infty); D(-\Delta)) \cap C^1([0,\infty); H^1_{\partial \Omega_D}(\Omega)) \cap C^2([0,\infty); L^2(\Omega)).
\end{eqnarray*}

Multiplying the first equation of (\ref{eq:AP}) by $\partial_t u_k$ and performing integration by parts, it yields
\begin{eqnarray}\label{est1}
&&\frac12 \frac{d}{dt} ||\partial_t u_k(t)||_{L^2(\Omega)}^2 + \frac12 \frac{d}{dt} ||\nabla u_k(t)||_{L^2(\Omega)}^2 + \frac{d}{dt}\int_\Omega F_k(u_k(x,t))\,dxdt\\
&& + \int_\Omega a(x) |\partial_t u_k(x,t)|^2\,dx =0, \hbox{ for all } t\in [0,\infty),\nonumber
\end{eqnarray}
where
\begin{eqnarray}\label{primitive}
F_k(\lambda) = \int_0^\lambda f_k(s)\,ds.
\end{eqnarray}

Hence, taking (\ref{est1}) into account, we infer
\begin{eqnarray}\label{est2}
E_{u_k}(t) &+& \int_0^t \int_\Omega a(x) |\partial_t u_k(x,s)|^2\,dxds = E_{u_k}(0), \hbox{ for all } t\in [0,+\infty) \hbox{ and } k\in \mathbb{N},\nonumber
\end{eqnarray}
where
\begin{eqnarray}
E_{u_k}(t):= \frac12 \int_\Omega  |\partial_t u_k(x,t)|^2 + |\nabla u_k(x,t)|^2\,dx + \int_\Omega F_k(u_k(x,t))\,dx,
\end{eqnarray}
is the energy associated to problem (\ref{eq:AP}).

We observe that from (\ref{trunc func}), the function defined in (\ref{primitive}) is given by
\begin{equation}\label{primitive trunc func}
F_{k}(s):=\begin{cases}
\displaystyle \int_0^sf(\xi)\,d\xi, & |s|\leq k,\\
\displaystyle \int_0^k f(\xi) \,d\xi + f(k)[s-k], & s > k,\\
\displaystyle f(-k)[s+k] + \int_0^{-k}f(\xi)\,d\xi, & s < -k.
\end{cases}
\end{equation}

Since $f$ satisfies the sign condition, it results that $F_k(s) \geq 0$ for all $s\in \mathbb{R}$ and $k\in \mathbb{N}$. In addition, from (\ref{ass on f}) and (\ref{ass on F}), we obtain, respectively, that $|f(s)|\leq c [|s| + |s|^p]$ and $0 \leq F(s) \leq f(s) \,s$ for all $s\in \mathbb{R}$.  Then, we infer that
\begin{eqnarray}\label{bound on Fk}
|F_k(s)| \leq c [|s|^2 + |s|^{p+1}],~\hbox{ for all }s\in \mathbb{R}~\hbox{ and }~k\in\mathbb{N}.
\end{eqnarray}

Consequently,
\begin{eqnarray}\label{bound data L1}
\int_\Omega |F_k(u_{0,k})|\,dx &\leq& c\int_\Omega \left[|u_{0,k}|^2 + |u_{0,k}|^{p+1} \right]\,dx\\
&\lesssim&  ||u_{0,k}||_{H^1_{\partial \Omega_D}(\Omega)}.\nonumber
\end{eqnarray}

Assuming that $p\geq 1$ is under conditions (\ref{ass on f'}), we have for every dimension $n\geq 1$ that $H^1_{\partial \Omega_D}(\Omega) \hookrightarrow L^{p+1}(\Omega)$, which implies that the RHS of  (\ref{bound data L1}) is bounded.
So, estimates (\ref{est2}) (also called energy identity for the auxiliary problem (\ref{eq:AP}) and (\ref{bound data L1}) and convergence (\ref{conv init data}),  yield a subsequence of $\{u_k\}$, reindexed again by $\{u_k\}$,  such that
\begin{eqnarray}
&&u_k \rightharpoonup u ~\hbox{ weakly * in } L^\infty(0,\infty; H^1_{\partial \Omega_D}(\Omega)),\label{conver1}\\
&&\partial_t u_k \rightharpoonup \partial_t u ~\hbox{ weakly * in } L^\infty(0,\infty; L^2(\Omega))\label{conver2},\\
&& \sqrt{a(x)} \partial_t u_{k} \rightharpoonup \sqrt{a(x)} \partial_t u \hbox{ weakly in }L^{2}(0,\infty; L^{2}(\Omega)).\label{eq:3.56}
\end{eqnarray}

Employing the standard compactness result (see Simon \cite{Simon}) we also deduce that
\begin{eqnarray}\label{conver3}
u_k \rightarrow u~\hbox{ strongly in } L^\infty (0,T; L^{2^{\ast}-\eta}(\Omega)); \hbox{ for all } T>0,
\end{eqnarray}
where $2^{\ast}:= \frac{2n}{n-2}$ and $\eta >0$ is small enough. In addition, from (\ref{conver3}), we obtain
\begin{eqnarray}\label{conver4}
u_k \rightarrow u~\hbox{ a. e. in  } \Omega \times (0,T), \hbox{ for all } T>0.
\end{eqnarray}

On the other hand, from (\ref{ass on f}), (\ref{ass on f'}), (\ref{conver1}) and once $H^1_{\partial \Omega_D}(\Omega)\hookrightarrow  L^{p+1}(\Omega)\hookrightarrow  L^{\frac{p+1}{p}}(\Omega)$
the following estimate holds:
\begin{eqnarray}\label{estIII}
\|f_k(u_k)\|_{L^{\frac{p+1}{p}}}^{\frac{p+1}{p}} &=& \int_0^{T} \int_\Omega |f_k(u_k(x,t))|^{\frac{p+1}{p}}\,dxdt \nonumber\\
&\lesssim& \int_0^T\int_\Omega |u_k|^{\frac{p+1}{p}}\,dxdt + \int_{0}^{T}\int_\Omega |u_k|^{p+1}\,dx dt\nonumber\\
&=& \int_0^T \|u_k\|_{L^{\frac{p+1}{p}}(\Omega) }^{\frac{p+1}{p}}\,dt+\int_0^T \|u_k\|_{L^{p+1}(\Omega)}^{p+1}\, dt\nonumber\\
&\lesssim& \int_0^T \|u_k\|_{H^1_{\partial \Omega_D}(\Omega)}^{\frac{p+1}{p}}\,dt+\int_0^T \|u_k\|_{H^1_{\partial \Omega_D}(\Omega)}^{p+1}\, dt\nonumber\\
&\lesssim& \|u_k\|_{L^\infty(0,T;H^1_{\partial \Omega_D}(\Omega))}^{\frac{p+1}{p}}+\|u_k\|_{L^\infty(0,T;H^1_{\partial \Omega_D}(\Omega))}^{p+1}\nonumber\\
& \leq& c <+\infty,~\hbox{ for all }t\geq 0.
\end{eqnarray}
It is easy to see that
\begin{equation}\label{estIII.1}
f(u) \in L^\infty(0,\infty;L^{\frac{p+1}{p}}(\Omega)).
\end{equation}
Indeed,
\begin{eqnarray}\label{estIII.2}
\int_\Omega |f(u(x,t))|^{\frac{p+1}{p}}\,dx &\lesssim& \int_\Omega |u(x,t)|^{\frac{p+1}{p}}\,dx + \int_\Omega |u(x,t)|^{p+1}\,dx \nonumber\\
& \lesssim& \|u(\cdot,t)\|_{H^1_{\partial \Omega_D}(\Omega)}^{\frac{p+1}{p}}+ \|u(\cdot,t)\|_{H^1_{\partial \Omega_D}(\Omega)}^{p+1}\nonumber\\
& \lesssim& \|u\|_{L^\infty(0,T;H^1_{\partial \Omega_D}(\Omega))}^{\frac{p+1}{p}} +\|u\|_{L^\infty(0,T;H^1_{\partial \Omega_D}(\Omega))}^{p+1}<+\infty,~\hbox{ for all }t\geq 0.
\end{eqnarray}
From \eqref{estIII.2} and the definition of essential supremum we obtain \eqref{estIII.1}.

In addition, from (\ref{conver4}) and the continuity of the function $f$, we get
\begin{eqnarray}\label{conver5''}
f_k(u_k) \rightarrow f(u) \hbox{ a. e. in  } \Omega \times (0,T), \hbox{ for all } T>0.
\end{eqnarray}

Indeed, the convergence (\ref{conver4}) guarantees the existence of set $Z_T \subset \Omega \times (0,T)$ with $\operatorname{meas}(Z_T)=0$  such that $u_k(x,t) \rightarrow u(x,t)$ for all $(x,t) \in \Omega \times (0,T) \setminus Z_T$ when $k \rightarrow \infty$. Therefore, for all $(x,t) \in \Omega \times (0,T) \setminus Z_T$ there exists a positive constant $L=L(x,t)>0$ verifying $|u_k(x,t)|<L,$ for all $k \in \mathbb{N}$.  Then, using the definition of $f_k$, we obtain that
\begin{eqnarray}\label{boundedness}
\hbox{ if } |u_k(x,t)|<L, \hbox{ for all } k \in \mathbb{N} \hbox{ then }~ f_k(u_k(x,t))=f(u_k(x,t)), \hbox{ for all } k\geq L,
\end{eqnarray}
that is,
\begin{equation}\label{boundednes1}
f_k(u_k(x,t))-f(u_k(x,t)) \rightarrow 0 \hbox{ when } k \rightarrow \infty \hbox{ for all } (x,t) \in \Omega \times (0,T) \setminus Z_T.
\end{equation}
On the other hand, employing the continuity of $f$ it follows that
\begin{equation}\label{boundednes2}
f(u_k(x,t))-f(u(x,t)) \rightarrow 0 \hbox{ when } k \rightarrow \infty \hbox{ for all } (x,t) \in \Omega \times (0,T) \setminus Z_T.
\end{equation}
From \eqref{boundednes1} and \eqref{boundednes2} the convergence (\ref{conver5''}) holds.

\begin{Lemma}[Strauss] Let $\mathcal{O}$ be an open and bounded subset of $\mathbb{R}^N$, $N\geq 1$, $1<q<+\infty$ and $\{u_n\}_{n\in \mathbb{N}}$ a sequence which is bounded in $L^q(\mathcal{O})$. If $u_n \rightarrow u$ a.e. in $\mathcal{O}$, then $u\in L^q(\mathcal{O})$ and $u_n \rightharpoonup u$ weakly in $L^q(\mathcal{O})$. In addition, if $1\leq r <q$ we also have $u_n \rightarrow u$ strongly in $L^r(\mathcal{O})$.
\end{Lemma}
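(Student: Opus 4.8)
The plan is to decompose the statement into its three assertions and establish them in order: first $u\in L^q(\mathcal{O})$, then the weak convergence $u_n\rightharpoonup u$ in $L^q(\mathcal{O})$, and finally the strong convergence $u_n\to u$ in $L^r(\mathcal{O})$ for $1\le r<q$. Throughout, set $M:=\sup_{n}\|u_n\|_{L^q(\mathcal{O})}<+\infty$, and recall $|\mathcal{O}|<+\infty$. The first assertion is immediate from Fatou's lemma: since $|u_n|^q\to|u|^q$ a.e., one has $\int_{\mathcal{O}}|u|^q\,dx=\int_{\mathcal{O}}\liminf_{n}|u_n|^q\,dx\le\liminf_{n}\int_{\mathcal{O}}|u_n|^q\,dx\le M^q<+\infty$, so $u\in L^q(\mathcal{O})$ (and hence $u\in L^r(\mathcal{O})$ for every $r\le q$, as $\mathcal{O}$ has finite measure).

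For the weak convergence I would argue directly by duality. Fix $\varphi\in L^{q'}(\mathcal{O})$ with $q'=q/(q-1)$, and let $\varepsilon>0$. Since $|\mathcal{O}|<+\infty$ and $u_n\to u$ a.e., Egorov's theorem provides a measurable set $A\subset\mathcal{O}$ with $|\mathcal{O}\setminus A|$ as small as we please on which $u_n\to u$ uniformly. Splitting $\int_{\mathcal{O}}(u_n-u)\varphi\,dx$ over $A$ and $\mathcal{O}\setminus A$, the first piece is bounded by $\|u_n-u\|_{L^\infty(A)}\,\|\varphi\|_{L^1(\mathcal{O})}$, which tends to $0$ as $n\to\infty$; the second piece is bounded, via Hölder's inequality, by $\bigl(M+\|u\|_{L^q(\mathcal{O})}\bigr)\,\|\varphi\|_{L^{q'}(\mathcal{O}\setminus A)}$, which is $<\varepsilon$ once $|\mathcal{O}\setminus A|$ is chosen small enough, by the absolute continuity of the integral of $|\varphi|^{q'}$. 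Hence $\limsup_{n}\bigl|\int_{\mathcal{O}}(u_n-u)\varphi\,dx\bigr|\le\varepsilon$ for all $\varepsilon>0$, so $\int_{\mathcal{O}}u_n\varphi\,dx\to\int_{\mathcal{O}}u\varphi\,dx$; as $\varphi\in L^{q'}(\mathcal{O})$ was arbitrary, $u_n\rightharpoonup u$ in $L^q(\mathcal{O})$. (Alternatively, one could extract a weakly convergent subsequence using reflexivity of $L^q$, identify its limit with $u$ through Mazur's lemma together with the a.e.\ convergence, and conclude by the standard subsequence principle; the Egorov route has the advantage of not invoking reflexivity.)

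For the strong convergence in $L^r$ with $1\le r<q$, the point is the uniform integrability of $\{|u_n-u|^r\}$. For any measurable $E\subset\mathcal{O}$, Hölder's inequality with exponents $q/r$ and $q/(q-r)$ gives $\int_E|u_n|^r\,dx\le|E|^{1-r/q}\bigl(\int_E|u_n|^q\,dx\bigr)^{r/q}\le|E|^{1-r/q}M^r$, and the same bound holds with $u$ in place of $u_n$ since $u\in L^q(\mathcal{O})$; as $r\ge1$ we have $|u_n-u|^r\le 2^{r-1}\bigl(|u_n|^r+|u|^r\bigr)$, so $\int_E|u_n-u|^r\,dx\to0$ as $|E|\to0$, uniformly in $n$. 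Combining this with $|u_n-u|^r\to0$ a.e.\ and $|\mathcal{O}|<+\infty$, Vitali's convergence theorem yields $\int_{\mathcal{O}}|u_n-u|^r\,dx\to0$, i.e.\ $u_n\to u$ strongly in $L^r(\mathcal{O})$.

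The only genuinely delicate step is the identification of the weak limit in $L^q$ with $u$: a.e.\ convergence of a bounded sequence does not in general force weak convergence, and it is precisely the finiteness of $|\mathcal{O}|$ — through Egorov's theorem and the absolute continuity of the integral — that makes the argument go through. The first and third assertions are routine applications of Fatou's and Vitali's theorems, respectively.
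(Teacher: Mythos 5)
Your proof is correct and complete. Note, however, that the paper does not actually prove this lemma: its ``proof'' consists of a citation to Brezis (Exercise 4.16) and to Strauss's original article, so there is no argument in the text to compare yours against line by line. Your three-step decomposition is the standard one and each step is sound: Fatou's lemma gives $u\in L^q(\mathcal{O})$; the Egorov-plus-absolute-continuity argument correctly identifies the weak limit by testing against an arbitrary $\varphi\in L^{q'}(\mathcal{O})$ (using that $\|\varphi\|_{L^1(\mathcal{O})}<\infty$ on the good set and that $\|\varphi\|_{L^{q'}(\mathcal{O}\setminus A)}$ is small on the bad set); and uniform integrability of $\{|u_n-u|^r\}$ via H\"older with exponents $q/r$ and $q/(q-r)$, combined with Vitali's theorem, gives the strong $L^r$ convergence. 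The route most commonly seen in the literature (and the one you mention parenthetically) instead extracts a weakly convergent subsequence by reflexivity of $L^q$ and identifies the limit through Mazur's lemma or through the already-established strong $L^r$ convergence, finishing with the subsequence principle; your Egorov argument buys a proof that does not invoke reflexivity and hence adapts verbatim to $q=1$ against bounded test functions, at the modest cost of handling the splitting estimates explicitly. Either way, your correct observation that the finiteness of $|\mathcal{O}|$ is what rescues the passage from a.e.\ convergence to weak convergence is exactly the crux of the lemma.
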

\begin{proof}
	See  \cite{Brezis}~(Exercise 4.16) or \cite{Strauss}.
\end{proof}

Gathering together \eqref{estIII}, \eqref{estIII.1} and Lions' Lemma, we deduce that \begin{eqnarray}\label{weak conv fk}
f_k(u_k) \rightharpoonup f(u)~\hbox{ weakly in } L^{\frac{p+1}{p}}(\Omega \times (0,T)).
\end{eqnarray}

Going back to problem (\ref{eq:AP}), multiplying by $ \varphi \, \theta $, where $ \varphi \in C_0^\infty(\Omega), \theta \in C_0^\infty(0,T)$ and performing integration by parts, we obtain
 \begin{eqnarray}\label{form var}
&&-\int_0^T \theta'(t) \int_\Omega \partial_t u_{k}(x,t)\, \varphi(x) \,dx dt + \int_0^T \theta(t) \int_\Omega \nabla u_{k}(x,t)\cdot \nabla \varphi(x) \,dxdt\\
&& +\int_0^T \theta(t) \int_\Omega f_k(u_k(x,t))\,\varphi(x) \,dxdt +\int_0^T \theta(t) \int_\Omega a(x) \partial_t u_k(x,t) \varphi(x)\,dxdt=0.\nonumber
 \end{eqnarray}

Passing to the limit in (\ref{form var}) and observing convergences (\ref{conver1})-(\ref{eq:3.56}) and (\ref{weak conv fk}), we get
 \begin{eqnarray}\label{limit}
&&-\int_0^T \theta'(t) \int_\Omega \partial_t u(x,t)\, \varphi(x) \,dx dt + \int_0^T \theta(t) \int_\Omega \nabla u(x,t)\cdot \nabla \varphi(x) \,dxdt\\
&& +\int_0^T \theta(t) \int_\Omega f(u(x,t))\,\varphi(x) \,dxdt + \int_0^T \theta(t) \int_\Omega a(x) \partial_t u(x,t) \varphi(x)\,dxdt\nonumber = 0,
 \end{eqnarray}
 for all $\varphi \in C_0^\infty(\Omega)$ and $\theta \in C_0^\infty(0,T)$.  We conclude that
 \begin{eqnarray}\label{dist sol}
 \partial_t^2 u - \Delta u + f(u) +a(x)\partial_t u = 0 ~\hbox{ in }\mathcal{D}'(\Omega \times (0,T)),
 \end{eqnarray}
 and since
 \begin{eqnarray*}
&& a(\cdot) \partial_t u \in L^\infty(0,T; L^2(\Omega)), ~\Delta u \in L^\infty(0,T; H^{-1}_{\partial \Omega_D}(\Omega)),~(\hbox{here } H^{-1}_{\partial \Omega_D}(\Omega) = (H^1_{\partial_{\Omega_D}}(\Omega))'), \\
&& a(x) \partial_t u \in L^2(0,T; L^2(\Omega)) ~\hbox{and}~ f(u) \in L^\infty(0,T; L^{\frac{p+1}{p}}(\Omega)),
 \end{eqnarray*}
we deduce that $\partial_t^2 u \in  L^2(0,T;  H^{-1}_{\partial \Omega_D}(\Omega))$ and
\begin{eqnarray}\label{weak solution'}
 \partial_t^2 u - \Delta u + f(u) + a(x) \partial_t u = 0 ~\hbox{ in } L^2(0,T;   H^{-1}_{\partial \Omega_D}(\Omega)).
\end{eqnarray}

Applying Lemma 8.1 of Lions-Magenes \cite{Lions-Magenes}, we deduce that
\begin{equation}\label{weak solution}
u\in C_w(0,T;H^1_{\partial \Omega_D}(\Omega)) \hbox{ and } \partial_t u \in C_w(0,T; L^2(\Omega)),
\end{equation}
where $C_w(0,T;Y)=$ space of functions $f\in L^\infty(0,T;Y)$ whose mappings $[0,T] \mapsto Y$ are weakly continuous, that is, $t \mapsto \langle y', f(t) \rangle_{Y',Y}$ is continuous in $[0,T]$ for all $y' \in Y'$, dual of $Y$.

\medskip

Our first result reads as follows:
 \begin{Theorem}\label{theo 1}
 Assume that $a\in L^\infty(\Omega)$  and $f\in C^1(\mathbb{R})$ satisfies $f(s)s\geq 0$ for all $s\in \mathbb{R}$.  In addition, suppose that assumptions (\ref{ass on f}), (\ref{ass on f'}) and (\ref{ass on F}) are in place.  Then, problem (\ref{eq:*}) has at least a global solution in the class
 {\small
 $$u\in C_w(0,T;H^1_{\partial \Omega_D}(\Omega)),~\partial_t u \in C_w(0,T; L^2(\Omega)),~\partial_t^2 u \in L^2(0,T; H_{\partial \Omega_D}^{-1}(\Omega)),$$}
 provided that $\{u_0,u_1\}\in H^1_{\partial \Omega_D}(\Omega) \times L^2(\Omega)$. Furthermore, assuming that $1\leq p \leq \frac{n}{n-2},n\geq 3$ or $p\geq 1, n=1,2$, we have the uniqueness of solution.
 \end{Theorem}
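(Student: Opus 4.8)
The plan is to prove Theorem \ref{theo 1} in two parts: existence (which has essentially already been carried out in the preceding pages) and uniqueness (under the stronger subcriticality hypothesis $p \leq \tfrac{n}{n-2}$). For existence, I would simply collect the work already done: the truncated problems \eqref{eq:AP} have global regular solutions $u_k$ by Lemma \ref{Lema2} and semigroup theory, the energy identity \eqref{est2} together with \eqref{bound data L1} and the embedding $H^1_{\partial\Omega_D}(\Omega)\hookrightarrow L^{p+1}(\Omega)$ give uniform bounds, and passing to the limit through \eqref{conver1}--\eqref{weak conv fk} yields a solution of \eqref{weak solution'}. The regularity class then follows from \eqref{weak solution} via Lions--Magenes, and since the bounds are uniform in $T$ the solution is global. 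One small point to make explicit: the initial conditions $u(0)=u_0$, $\partial_t u(0)=u_1$ must be recovered in the sense of $C_w$; this comes from the weak continuity \eqref{weak solution} together with the convergence \eqref{conv init data} of the truncated data, testing \eqref{form var} against $\theta$ not vanishing at $t=0$.

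For uniqueness I would use the classical energy method of Lions--Magenes / Ladyzhenskaya. Suppose $u$ and $\tilde u$ are two solutions with the stated regularity and the same data, and set $w = u - \tilde u$. Then $w$ satisfies
\begin{equation*}
\partial_t^2 w - \Delta w + a(x)\partial_t w = -\bigl(f(u) - f(\tilde u)\bigr) \quad \text{in } L^2(0,T; H^{-1}_{\partial\Omega_D}(\Omega)),
\end{equation*}
with $w(0)=0$, $\partial_t w(0)=0$. The natural thing is to multiply by $\partial_t w$ and integrate, obtaining formally
\begin{equation*}
\frac{1}{2}\frac{d}{dt}\Bigl(\|\partial_t w\|_{L^2}^2 + \|\nabla w\|_{L^2}^2\Bigr) + \int_\Omega a(x)|\partial_t w|^2\,dx = -\int_\Omega \bigl(f(u)-f(\tilde u)\bigr)\partial_t w\,dx.
\end{equation*}
Dropping the nonnegative damping term and using \eqref{ass on f}, the right-hand side is bounded by $c\int_\Omega (1 + |u|^{p-1} + |\tilde u|^{p-1})|w|\,|\partial_t w|\,dx$. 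By Hölder with exponents tuned so that $|u|^{p-1}\in L^{n}(\Omega)$ — which is exactly what $H^1\hookrightarrow L^{2^*}$ gives when $(p-1)\cdot n \leq 2^*$, i.e. $p \leq \tfrac{n}{n-2}$ — together with $w \in L^{2^*}$ and $\partial_t w \in L^2$, and then the Sobolev embedding $\|w\|_{L^{2^*}} \lesssim \|\nabla w\|_{L^2}$, one bounds the right-hand side by $C(t)\bigl(\|\partial_t w\|_{L^2}^2 + \|\nabla w\|_{L^2}^2\bigr)$ with $C(\cdot)\in L^1(0,T)$ (here $C(t)$ absorbs $\|u(t)\|_{H^1}^{p-1}+\|\tilde u(t)\|_{H^1}^{p-1}$, which is bounded by the energy estimate). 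Grönwall's lemma then forces $w\equiv 0$ on $[0,T]$, and since $T$ is arbitrary, uniqueness holds on $[0,\infty)$.

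The main obstacle is that the formal energy identity above is not directly justified: $w$ only lies in $C_w(0,T;H^1_{\partial\Omega_D})$ with $\partial_t w \in C_w(0,T;L^2)$ and $\partial_t^2 w \in L^2(0,T;H^{-1}_{\partial\Omega_D})$, so $\partial_t w$ is not an admissible test function for the equation (the pairing $\langle \partial_t^2 w, \partial_t w\rangle$ is only formally $\tfrac12\tfrac{d}{dt}\|\partial_t w\|^2$), and the term $\langle -\Delta w, \partial_t w\rangle$ requires care as well. The standard remedy — which I would follow — is the regularization/mollification argument of Lions--Magenes (time-mollification combined with a density argument, or the Visik--Ladyzhenskaya trick of integrating against a smoothed $\partial_t w$ and passing to the limit), which shows that the energy identity holds for such weak solutions, at least as an inequality, with the correct sign on the damping term. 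Once this technical point is granted, the Grönwall argument is routine. I would also note that the subcriticality range for uniqueness is genuinely smaller than \eqref{ass on f'}: the gap $\tfrac{n}{n-2} < \tfrac{n+2}{n-2}$ is precisely the loss one pays for estimating the difference $f(u)-f(\tilde u)$ in the energy norm rather than merely bounding $f(u)$ in $L^{(p+1)/p}$.
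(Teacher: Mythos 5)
Your proposal follows the same route as the paper: the existence part is exactly the limit process already carried out in Section 2 (convergences \eqref{conver1}--\eqref{weak conv fk} leading to \eqref{weak solution'} and \eqref{weak solution}), and for the attainment of the initial data and the uniqueness the paper simply invokes the classical energy/Gr\"onwall argument of Lions (Theorem 1.2 of \cite{Lions1}), which is precisely the argument you sketch, including the H\"older--Sobolev bookkeeping that forces $p\leq \tfrac{n}{n-2}$ and the Lions--Magenes regularization needed to justify pairing the equation with $\partial_t w$. Your write-up is correct and, if anything, more explicit than the paper's one-line citation.
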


\medskip
 \begin{proof}
The uniqueness of solution as well as to prove that $u(0)=u_0$ and $\partial_t u(0)=u_1$ follow the same ideas used in Lions \cite{Lions1} (Theorem 1.2).
\end{proof}

\medskip

\subsection{Recovering the regularity in time for the range $1\leq p < \frac{n}{n-2},$ $n\geq 3$. }

When $p\geq 1,$ $n=1,2$, the result is trivially verified and it will be omitted.

The goal of this subsection is to prove that if $1\leq p < \frac{n}{n-2},$ $n\geq 3$, the related solutions to problem (\ref{eq:*}) are in the class
$$u\in C^0([0,T];H^1_{\Omega_D}(\Omega)),~\partial_t u \in C^0([0,T]; L^2(\Omega))$$
and, in addition, one has
\begin{eqnarray*}
\{u_k, \partial_t u_k\} \rightarrow \{u,\partial_tu\} \hbox{ in } C^0([0,T];H^1_{\Omega_D}(\Omega)) \times C^0([0,T]; L^2(\Omega)).
\end{eqnarray*}

To prove the above statements, we need to prove that
\begin{eqnarray}\label{main conv}
f_k(u_k) \rightarrow f(u) ~\hbox{ strongly in }L^2(\Omega \times (0,T)).
\end{eqnarray}

In fact, first we observe that
\begin{eqnarray}\label{I}
&&\int_0^T \int_\Omega |f_k(u_k) - f(u)|^2\,dxdt\\
&& \lesssim \int_0^T \int_\Omega |f_k(u_k) - f(u_k)|^2\,dxdt + \int_0^T \int_\Omega |f(u_k) - f(u)|^2\,dxdt.\nonumber
\end{eqnarray}

In view of (\ref{ass on f}) one has
\begin{eqnarray*}
\int_\Omega |f(u_k) - f(u)|^2 \,dx &\lesssim& \int_\Omega |u_k - u|^2\,dx + \int_\Omega |u_k|^{2(p-1)} |u_k -u|^2\,dx \nonumber \\
&&+ \int_\Omega |u|^{2(p-1)} |u_k -u|^2\,dx \nonumber \\
&=& I_{1,k}+I_{2,k}+I_{3,k}
\end{eqnarray*}

We observe that since $\frac{p-1}{p} + \frac{1}{p}=1$, H\"older inequality yields
\begin{eqnarray*}
I_{2,k} \leq \left(\int_\Omega |u_k|^{2p} \right)^{\frac{p-1}{p}}\left( \int_\Omega |u_k - u|^{2p}\right)^{\frac{1}{p}}.
\end{eqnarray*}

Choosing $p< \frac{n}{n-2}$ it implies that $2p < \frac{2n}{n-2}=2^{\ast}$ and, consequently, from (\ref{conver1}) and (\ref{conver3}) we deduce that $I_{2,k} \rightarrow 0$ as $k\rightarrow +\infty$. Analogously, we also deduce that $I_{3,k} \rightarrow 0$ as $k\rightarrow +\infty$.  We trivially obtain that $I_{1,k} \rightarrow 0$ as $k\rightarrow +\infty$.  Then,
\begin{eqnarray}\label{II}
\int_0^T \int_\Omega |f(u_k) - f(u)|^2\,dxdt \rightarrow 0 ~\hbox{ as } k \rightarrow \infty.
\end{eqnarray}

From (\ref{I}) it remains to prove that
\begin{eqnarray}\label{III}
\int_0^T \int_\Omega |f_k(u_k) - f(u_k)|^2 \,dxdt \rightarrow 0 ~\hbox{ as } k \rightarrow \infty.
\end{eqnarray}

Let us consider, initially, $t\in [0,T]$ fixed and define $$\Omega_k^t:=\{x\in \Omega: |u_k(x,t)| >k\}.$$

Observing that
\begin{eqnarray*}
f_k(u_k) - f(u_k) =0, ~\hbox{ if } |u_k(x,t)|\leq k,
\end{eqnarray*}
we have

\begin{eqnarray}\label{IV}
&&\int_\Omega |f_k(u_k) - f(u_k)|^2\,dx = \int_{\Omega_k^t} |f_k(u_k) - f(u_k)|^2\,dx\\
&&\lesssim \left[\int_{\Omega_k^t} |f(u_k)|^2\, dx + \int_{\Omega_k^t} |f(-k)|^2\, dx + \int_{\Omega_k^t} |f(k)|^2\, dx  \right]\nonumber\\
&& \lesssim\left[ \int_{\Omega_k^t} [|u_k|^2 + |u_k|^{2p} ]\, dx +  \int_{\Omega_k^t} [|k|^2 + |k|^{2p} ]\, dx\right]\nonumber\\
&& \lesssim\left[ \int_{\Omega_k^t} |u_k|^{2p} \, dx +  \int_{\Omega_k^t} |k|^{2p}\, dx\right]\nonumber\\
&& \lesssim \int_{\Omega_k^t} |u_k|^{2p} \,dx.\nonumber
\end{eqnarray}

Before analyzing the term on the RHS of (\ref{IV}) we note that since $H^1_{\Omega_D}(\Omega)\hookrightarrow L^{\frac{2n-\frac12}{n-2}}(\Omega)$ and the convergence (\ref{conv init data}) are in place, we obtain
\begin{eqnarray} \label{V}
\left( \int_{\Omega_k^t} k^{\frac{2n-\frac{1}{2}}{n-2}}\,dx \right) &\lesssim& \left( \int_{\Omega_k^t}
|u_k|^{\frac{2n-\frac{1}{2}}{n-2}}\,dx \right)\\
&=& ||u_k(t)||_{L^{\frac{2n-\frac12}{n-2}}(\Omega_k^t)}^{\frac{2n-\frac12}{n-2}} \lesssim ||u_k(t)||_{H^1_{\Omega_D}(\Omega)}^{\frac{2n-\frac12}{n-2}} \lesssim [E_{u_k}(0)]^{\frac{2n-\frac12}{n-2}} \leq C,\nonumber
\end{eqnarray}
for all $t\in [0,T]$, where $C$ is a positive constant which does not depend on $k$ and $t$.
Thus, it yields
\begin{eqnarray}\label{VI}
\operatorname{meas}(\Omega_k^t)  \lesssim k^{\frac{-2n+\frac12}{n-2}}, \hbox{ for all } t\in [0,T].
\end{eqnarray}

\medskip

Let $\beta:= \frac{2n}{(2p)(n-2)}$, for $n\geq 3$. Observe that we have the following inequalities:
$$p< \frac{n}{n-2} \Leftrightarrow  2n> (2p)(n-2)\Leftrightarrow 2p< \frac{2n}{n-2}=2^{\ast} \Leftrightarrow  \beta > 1. $$

Setting $\alpha>0$ such that $\frac{1}{\alpha} + \frac{1}{\beta}=1$, we deduce that $\alpha=\frac{2n}{2n-(2p)(n-2)}$ and using H\"older inequality we get
\begin{eqnarray}\label{VII'}
\int_{\Omega_k^t} |u_k|^{2p}\,dx &\leq& \left( \operatorname{meas}(\Omega_k)\right)^{\frac{2n-(2p)(n-2)}{2n}} \left(\int_{\Omega_k^t} |u_k|^{\frac{2n}{n-2}}\right)^{\frac{(2p)(n-2)}{2n}}\\
&=& \left( \operatorname{meas}(\Omega_k)\right)^{\frac{2n-(2p)(n-2)}{2n}} ||u_k(t)||_{L^{\frac{2n}{n-2}}(\Omega)}^{2p}.\nonumber
\end{eqnarray}

Thus, from (\ref{VI}) and (\ref{VII'}) we conclude
\begin{eqnarray}\label{VIII'}
\int_0^T\int_{\Omega_k^t} |u_k|^{2p}\,dx &\leq& k^{\left(\frac{-2n+\frac12}{n-2}\right)\left(\frac{2n-(2p)(n-2)}{2n}\right)} \int_0^T||u_k(t)||_{L^{\frac{2n}{n-2}}(\Omega)}^{2p}\,dt\\
&\lesssim& k^{\left(\frac{-2n+\frac12}{n-2}\right)\left(\frac{2n-(2p)(n-2)}{2n}\right)}\int_0^T ||u_k(t)||_{H^1_{\Omega_D}(\Omega)}^{2p}\,dt\nonumber\\
&\lesssim& k^{\left(\frac{-2n+\frac12}{n-2}\right)\left(\frac{2n-(2p)(n-2)}{2n}\right)}[ E_{u_k}(0)]^p,\nonumber
\end{eqnarray}

Employing the fact that $E_{u_k}(0) \leq C$ for all $k\in \mathbb{N}$ and $\left(\frac{-2n+\frac12}{n-2}\right)\left(\frac{2n-(2p)(n-2)}{2n}\right)< 0$, in light of inequality (\ref{VIII'}) , we prove that

\begin{eqnarray}\label{VIII'.convergente}
\int_0^T\int_{\Omega_k^t} |u_k|^{2p}\,dx \rightarrow 0,\, \hbox{ as } k\rightarrow +\infty.
\end{eqnarray}

Gathering (\ref{IV}) and (\ref{VIII'.convergente}) together, we conclude (\ref{III}) which proves (\ref{main conv}).

\medskip

Now, we define the sequence $z_{\mu,\sigma}=u_{\mu}-u_{\sigma}$, $\mu,\sigma\in
\mathbb{N}$, and from (\ref{eq:AP}) we deduce
\begin{eqnarray}\label{eq:3.59}
&&\frac12 \frac{d}{dt} \left\{||\partial_t z_{\mu,\sigma}(t)||_{L^2(\Omega)}^2 +||\nabla
z_{\mu,\sigma}(t)||_{L^2(\Omega)}^2 \right\}
+\int_{\Omega} a(x)|\partial_t z_{\mu,\sigma}|^2\,dx \\
&&=  \int_{\Omega}\left(f_{\mu}(u_{\mu})-
f_{\sigma}(u_{\sigma})\right)(\partial_t u_{\mu}-\partial_t u_{\sigma}) \,dx.
\nonumber
\end{eqnarray}

Integrating (\ref{eq:3.59}) over $(0,t)$, we obtain
\begin{eqnarray}\label{eq:3.60}
&&\frac12  \left\{||\partial_t z_{\mu,\sigma}(t)||_{L^2(\Omega)}^2 +||\nabla z_{\mu,\sigma}(t)||_{L^2(\Omega)}^2 \right\}
+\int_0^t\int_{\Omega} a(x)|\partial_t z_{\mu,\sigma}|^2\,dx ds\\
&& \leq  \frac12\left\{||u_{1,\mu} - u_{1,\sigma}||_{L^2(\Omega)}^2 + ||\nabla u_{0,\mu}
- \nabla u_{0,\sigma}||_{L^2(\Omega)}^2\right\}\nonumber\\
&&+ \int_0^t \int_{\Omega}\left(f_{\mu}(u_{\mu})-
f_{\sigma}(u_{\sigma})\right)(\partial_t u_{\mu}-\partial_t u_{\sigma}) \,dxds.
\nonumber
\end{eqnarray}

 The convergences (\ref{conv init data}), (\ref{conver2}) and (\ref{main conv}) imply that the terms on the RHS of  the  (\ref{eq:3.60}) converges to zero
as $\mu, \sigma \rightarrow +\infty$.  Thus, we deduce that
\begin{eqnarray}\label{Cauchy conv}\quad
&&u_{\mu} \rightarrow u \hbox { in }
C^0([0,T];H^1_{\Omega_D}(\Omega)) \cap C^1 ([0,T]; L^2(\Omega)),\\
&&\lim_{\mu \rightarrow +\infty}
\int_0^T\int_{\Omega} a(x) | \partial_t u_{\mu}|^2
dx\, ds = \int_0^T\int_{\Omega} a(x) |\partial_t u|^2
dx\, ds,\label{damping conv'}
\end{eqnarray}
for all $T>0$.

\medskip

\subsection{Estimating $F_k(u_k)$ }

\medskip

Inequality (\ref{bound on Fk}) gives $$|F_k(s)| \leq c [|s|^2 + |s|^{p+1}],$$ for all $s\in \mathbb{R}$ and $k\in \mathbb{N}$.

Since $1\leq p < \frac{n}{n-2}$ if $n\geq 3$ and $ \frac{n}{n-2} <  \frac{n+2}{n-2}$ we obtain $2\leq p+1 < \frac{2n}{n-2}=2^{\ast}$. Consequently, there exists $\varepsilon >0$ such that $p+1+\varepsilon=2^{\ast}$.  Then, $H^1_{\Omega_D}(\Omega) \hookrightarrow L^{p+1+\varepsilon}(\Omega)$ and, consequently,
\begin{eqnarray}\label{bound data Lp'}
\int_\Omega |F_k(u_{0,k})|^\frac{p+1+\varepsilon}{p+1}\,dx &\leq& c\int_\Omega |u_{0,k}|^{\frac{2(p+1+\varepsilon)}{p+1}} + |u_{0,k}|^{p+1+\varepsilon} \,dx\\
&\lesssim&  ||u_{0,k}||_{H^1_{\Omega_D}(\Omega)}^{p+1+\varepsilon} \leq C.\nonumber
\end{eqnarray}
Analogously,
\begin{eqnarray}\label{bound on Fk'}
\int_\Omega |F_k(u_{k}(x,t_0))|^\frac{p+1+\varepsilon}{p+1}\,dx
\lesssim ||u_{k}(\cdot,t_0)||_{H^1_{\Omega_D}(\Omega)}^{p+1+\varepsilon} \leq C E_{u_k}(0)^{p+1+\varepsilon},
\end{eqnarray}
for all $t_0\in [0,T]$.  The boundedness of $E_{u_k}(0)$ implies that there exists $\chi \in L^{\frac{2^{\ast}}{p+1}}(\Omega )$ verifying the following convergence:
\begin{eqnarray}\label{weak conv of Fk}
F_k(u_k(\cdot,t_0))\rightharpoonup \chi ~\hbox{ weakly in }L^{\frac{2^{\ast}}{p+1}}(\Omega ),~\hbox{ as } k \rightarrow +\infty.
\end{eqnarray}



In what follows we are going to prove that $\chi=F(u(\cdot,t_0))$. Indeed, from \eqref{Cauchy conv} we obtain $u_k(\cdot,t_0) \rightarrow u(\cdot,t_0)$ strongly in $L^2(\Omega)$.  Thus,
\begin{equation}\label{victor1}
u_k(x,t_0) \rightarrow u(x,t_0) \hbox{ a. e. in } \Omega.
\end{equation}

Note that,
\begin{eqnarray}\label{victor3}
&&|F_k(u_k(x,t_0))-F(u(x,t_0))| \\
&&\leq |F_k(u_k(x,t_0))-F(u_k(x,t_0))|+|F(u_k(x,t_0))-F(u(x,t_0))|.\nonumber
\end{eqnarray}

The convergence  (\ref{victor1}) and the continuity of $F$ imply
\begin{eqnarray}\label{victor4}
F(u_k(x,t_0)) \rightarrow F(u(x,t_0)) \hbox{ a. e. in } \Omega.
\end{eqnarray}

In light of inequality (\ref{victor3}), to prove that
\begin{eqnarray}\label{conv a.e. F_k}
F_k(u_k(x,t_0)) \rightarrow F(u(x,t_0)) \hbox{ a. e. in } \Omega.
\end{eqnarray}
it remains to prove that
\begin{eqnarray*}
F_k(u_k(x,t_0)) - F(u_k(x,t_0)) \rightarrow 0 \hbox{ a. e. in } \Omega,
\end{eqnarray*}

In fact, from (\ref{boundedness}), there exists a positive constant $L=L(x,t)>0$ such that \begin{eqnarray}\label{victor5}
|F_k(u_k(x,t_0))-F(u_k(x,t_0))| &=& \left|\int_{0}^{u_k(x,t_0)} f_k(s)ds-\int_{0}^{u_k(x,t_0)} f(s)ds\right|\nonumber\\
&\leq& \int_{-L}^{L} |f_k(s)-f(s)|ds =0,~\hbox{ if }k\geq L.
\end{eqnarray}

Therefore, combining \eqref{victor3}, \eqref{victor4} and \eqref{victor5}, we obtain (\ref{conv a.e. F_k}). Thus, from (\ref{bound on Fk'}) and Lions Lemma we deduce that
\begin{eqnarray}\label{main weak conv}
F_k(u_k(\cdot,t_0))\rightharpoonup F(u(\cdot,t_0)) ~\hbox{ weakly in }L^{\frac{2^{\ast}}{p+1}}(\Omega), \hbox{ as } k \rightarrow +\infty,
\end{eqnarray}
proving that $\chi=F(u(\cdot,t_0))$.

In addition, employing Strauss Lemma we also deduce that
\begin{eqnarray}\label{main strong conv}
F_k(u_k(\cdot, t_0))\rightarrow F(u(\cdot,t_0)) ~\hbox{ strongly in }L^r(\Omega),~\hbox{ as } k \rightarrow +\infty,
\end{eqnarray}
 for all $1\leq r < \frac{2^{\ast}}{p+1}$ and $t_0 \in [0,T]$.

\medskip

Now we are in a position to establish the following result:
\begin{Theorem}\label{theo 2}
 Assume that $a\in L^\infty(\Omega)$ is a nonnegative function and $f\in C^1(\mathbb{R})$ satisfies $f(s)s\geq 0$ for all $s\in \mathbb{R}$.  In addition, suppose that $f$ verifies assumption (\ref{ass on f}) with $1\leq p < \frac{n}{n-2},n\geq 3$ and $p\geq 1, n=1,2$ and assumption (\ref{ass on F}).  Then, given $\{u_0,u_1\}\in H^1_{\Omega_D}(\Omega) \times L^2(\Omega)$ problem (\ref{eq:*}) has an unique global solution in the class
 {\small
 $$u\in C^0([0,T];H^1_{\Omega_D}(\Omega)),~\partial_t u \in C^0([0,T]; L^2(\Omega)),~\partial_t^2 u \in L^2(0,T; H^{-1}_{\partial \Omega_D}(\Omega) ).$$}

In addition, the energy identity is verified, namely
\begin{eqnarray}\label{energyidentity}\qquad
E_u(t_2) + \int_{t_1}^{t_2} \int_\Omega a(x) |a(x) \partial_t u(x,t)|^2\,dxdt=E_u(t_1),~0\leq t_1 \leq t_2<+\infty,~\hbox{ where }\\
E_{u}(t):= \frac12\int_\Omega |\partial_t u(x,t)|^2 + |\nabla u(x,t)|^2\,dxdt + \int_\Omega F(u(x,t))\,dxdt.\nonumber
\end{eqnarray}
 \end{Theorem}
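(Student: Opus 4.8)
The plan is to combine Theorem \ref{theo 1}, which already yields a global weak solution $u$ of \eqref{eq:*} with $u\in C_w(0,T;H^1_{\Omega_D}(\Omega))$, $\partial_t u\in C_w(0,T;L^2(\Omega))$ and $\partial_t^2 u\in L^2(0,T;H^{-1}_{\partial\Omega_D}(\Omega))$, with the refined convergences \eqref{main conv}, \eqref{Cauchy conv} and \eqref{main strong conv} established above, in order to upgrade the time regularity from weak to strong continuity and to obtain the energy identity by passing to the limit in \eqref{est2}. For each $k\in\mathbb N$, Lemma \ref{Lema2} and semigroup theory give a regular solution $u_k\in C^0([0,T];D(-\Delta))\cap C^1([0,T];H^1_{\Omega_D}(\Omega))\cap C^2([0,T];L^2(\Omega))$, so $\{u_k,\partial_t u_k\}$ lies in the Banach space $X:=C^0([0,T];H^1_{\Omega_D}(\Omega))\times C^0([0,T];L^2(\Omega))$. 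The key point is the strong convergence \eqref{Cauchy conv}, which follows from the energy identity \eqref{eq:3.60} for $z_{\mu,\sigma}=u_\mu-u_\sigma$ together with the strong convergence \eqref{main conv} of the truncated nonlinearities: it shows that $\{u_k,\partial_t u_k\}$ is Cauchy in $X$, hence its limit $\{u,\partial_t u\}$ belongs to $X$, which is precisely the asserted regularity. The remaining statement $\partial_t^2 u\in L^2(0,T;H^{-1}_{\partial\Omega_D}(\Omega))$ and the fact that $u$ solves the equation there were obtained in \eqref{weak solution'}; the attainment of the initial data, $u(0)=u_0$ and $\partial_t u(0)=u_1$, follows by letting $k\to\infty$ in $u_k(0)=u_{0,k}$, $\partial_t u_k(0)=u_{1,k}$ via \eqref{conv init data} and the $C^0$, $C^1$ convergence just obtained. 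Since $T>0$ is arbitrary, the solution is global.

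\emph{Uniqueness.} I would argue exactly as in Theorem \ref{theo 1}, following Lions \cite{Lions1}: if $u,\tilde u$ are two solutions in the above class with the same data, then $w=u-\tilde u$ solves $\partial_t^2 w-\Delta w+(f(u)-f(\tilde u))+a(x)\partial_t w=0$ with zero initial data, and multiplying by $\partial_t w$ (legitimate by the regularity just established) gives
\[
\frac{d}{dt}E^L_w(t)\le -\int_\Omega \bigl(f(u)-f(\tilde u)\bigr)\,\partial_t w\,dx,\qquad E^L_w(t):=\frac12\int_\Omega\bigl(|\partial_t w|^2+|\nabla w|^2\bigr)\,dx .
\]
The subcritical bound \eqref{ass on f} with $1\le p<\frac{n}{n-2}$ (or $p\ge1$ when $n=1,2$), combined with Hölder's inequality and the embedding $H^1_{\Omega_D}(\Omega)\hookrightarrow L^{2^\ast}(\Omega)$, yields $\|(1+|u|^{p-1}+|\tilde u|^{p-1})\,w\|_{L^2(\Omega)}\lesssim \|w(t)\|_{H^1_{\Omega_D}(\Omega)}$, so the right-hand side is bounded by $C\,E^L_w(t)$, and Gronwall's inequality forces $w\equiv0$.

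\emph{Energy identity.} Subtracting \eqref{est2} at time $t_2$ from the same identity at time $t_1$, one has $E_{u_k}(t_2)+\int_{t_1}^{t_2}\int_\Omega a(x)|\partial_t u_k|^2\,dx\,dt=E_{u_k}(t_1)$ for all $0\le t_1\le t_2<+\infty$, and it remains to pass to the limit termwise. The kinetic and gradient parts of $E_{u_k}(t_i)$ converge to those of $E_u(t_i)$ by evaluating the strong convergence \eqref{Cauchy conv} at $t=t_i$, and the potential part $\int_\Omega F_k(u_k(\cdot,t_i))\,dx$ converges to $\int_\Omega F(u(\cdot,t_i))\,dx$ by the strong convergence \eqref{main strong conv} with $r=1$, which is admissible since $p+1<2^\ast$. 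The damping integral converges because $a\in L^\infty(\Omega)$ and $\partial_t u_k\to\partial_t u$ in $C^0([0,T];L^2(\Omega))$, so $\sqrt{a}\,\partial_t u_k\to\sqrt a\,\partial_t u$ in $C^0([0,T];L^2(\Omega))$ and the integral over $[t_1,t_2]$ of the square of its norm converges; this establishes \eqref{energyidentity} (the coefficient $a(x)$ appears only once inside the integral, the repetition in the displayed statement being a typographical slip).

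The genuinely nontrivial ingredient is already behind us: the strong convergence \eqref{main conv} of the truncated nonlinearities in $L^2(\Omega\times(0,T))$, which rests on the measure estimate \eqref{VI} and the power counting in \eqref{VIII'}--\eqref{VIII'.convergente}, and which is exactly what forces the Cauchy property \eqref{Cauchy conv}. The only other delicate point is that the convergence of the potential energy must hold at each fixed time $t_i$ separately, which is precisely why \eqref{main weak conv}--\eqref{main strong conv} were proved pointwise in the time variable rather than only after integration in $t$.
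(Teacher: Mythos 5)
Your proposal is correct and follows essentially the same route as the paper: the theorem is in effect a summary of the preceding subsections, where the Cauchy property \eqref{Cauchy conv} in $C^0([0,T];H^1_{\Omega_D}(\Omega))\times C^0([0,T];L^2(\Omega))$ is derived from \eqref{eq:3.60} together with the strong convergence \eqref{main conv} of $f_k(u_k)$, and the energy identity is obtained by passing to the limit in \eqref{est2} using \eqref{Cauchy conv}, \eqref{damping conv'} and \eqref{main strong conv}. Your Gronwall-based uniqueness argument is the standard Lions argument the paper invokes by reference, and your reading of the factor $a(x)|a(x)\partial_t u|^2$ in \eqref{energyidentity} as a typographical slip is the right one.
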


\section{Exponential Decay to Problem (\ref{eq:*})}

Throughout this section we will assume that $1\leq p < \frac{n}{n-2}$ if $n\geq 3$ and $p\geq 1$ if $n=1,2$.  Under these conditions we have the following embeddings:
\begin{eqnarray}\label{tower}
H^1_{\Omega_D}(\Omega) \hookrightarrow L^{2p}(\Omega)  \hookrightarrow L^{p}(\Omega).
\end{eqnarray}
Consider the auxiliary problem
\begin{equation}\label{Aux Prob}
\left\{
\begin{aligned}
&{\partial_t^2 u_k -\Delta u_k + f_k(u_k) + a(x)\partial_t u_k = 0\quad \hbox{in}\,\,\,\Omega \times (0, +\infty),}\\\
&{u_k=0\quad \hbox{on}\quad \partial \Omega_D \times (0,+\infty ),}\\\
&{\partial_\nu u_k=0\quad \hbox{on}\quad \partial \Omega_N \times (0,+\infty ),}\\\
&{u_k(x,0)=u_{0,k}(x);\quad \partial_t u_k(x,0)=u_{1,k}(x),\quad x\in\Omega,}
\end{aligned}
\right.
\end{equation}
whose associated energy functional is given by
\begin{eqnarray}\label{energy}\qquad
E_{u_k}(t):= \frac12\int_\Omega |\partial_t u_k(x,t)|^2 + |\nabla u_k(x,t)|^2\,dxdt + \int_\Omega F_k(u_k(x,t))\,dxdt,
\end{eqnarray}
where  $F_k(\lambda) = \int_0^\lambda F_k(s) \,ds$ and the energy identity reads as follows
\begin{eqnarray}\label{ident energy mu}
E_{u_k}(t_2) - E_{u_k}(t_1) = - \int_{t_1}^{t_2}\int_\Omega  a(x) |\partial_t u_k|^2 \,dxdt,
\end{eqnarray}
for all $0 \leq t_1 \leq t_2 <+\infty$.

Let $T_0>0$ be associated to the geometric control condition, that is, every ray of the geometric optics enters $\omega$ in a time $T^*<T_0$. Thus, our goal is to prove the observability inequality established in the following lemma.

\begin{Lemma} ~ There exists $k_0 \geq 1$ such that for every $k\geq k_0$, the corresponding solution  $u_k$ of \eqref{Aux Prob} satisfies the inequality
\begin{eqnarray}\label{obs ineq}
E_{u_k}(0) \leq C \int_{0}^{T}\int_\Omega  a(x) |\partial_t u_k|^2 \,dxdt \,dxdt,
\end{eqnarray}
for all $T>T_0$ and for some positive constant $C=C(||\{u_0,u_1\}||_{H_0^1(\Omega)\times L^2(\Omega)})$.
\end{Lemma}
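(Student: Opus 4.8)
The plan is to argue by contradiction, exploiting the compactness machinery set up in the previous section together with the linear observability estimate \eqref{Linear Obs Ineq} (equivalently, the exponential decay of \eqref{DLP} via the Gearhart--Pr\"uss--Huang theorem) and the unique continuation Assumption \ref{UCP}. Suppose \eqref{obs ineq} fails: then for each $m\in\mathbb{N}$ there are $k_m\to\infty$ and solutions $u_{k_m}$ of \eqref{Aux Prob} with data $\{u_{0,k_m},u_{1,k_m}\}$ bounded in $H^1_{\partial\Omega_D}(\Omega)\times L^2(\Omega)$, such that, after normalizing $\alpha_m:=\sqrt{E_{u_{k_m}}(0)}$, one has
\begin{eqnarray*}
\frac{1}{\alpha_m^2}\int_0^T\int_\Omega a(x)\,|\partial_t u_{k_m}|^2\,dx\,dt \longrightarrow 0,\qquad m\to\infty.
\end{eqnarray*}
Set $v_m:=u_{k_m}/\alpha_m$, so that $E^L$-type energy of $v_m$ at $t=0$ is of order one and $\int_0^T\int_\Omega a(x)|\partial_t v_m|^2\to 0$. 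The subcritical bounds from \eqref{bound on Fk} show that $\alpha_m$ cannot blow up (the data stay bounded), so after passing to a subsequence $\alpha_m\to\alpha\ge 0$; I will treat the two cases $\alpha>0$ and $\alpha=0$ separately.

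\textbf{Case $\alpha=0$ (the vanishing case).} Here $u_{k_m}\to 0$ in the energy norm, so the nonlinear term becomes negligible at first order: writing the equation for $v_m$ one gets $\partial_t^2 v_m-\Delta v_m + a(x)\partial_t v_m = -f_{k_m}(u_{k_m})/\alpha_m$, and since $|f_{k_m}(s)|\lesssim |s|+|s|^p$ with $p<\tfrac{n}{n-2}$, the forcing term is controlled in $L^1(0,T;L^2)$ (or the relevant dual space) by $\alpha_m^{\,p-1}$ times a bounded quantity, hence $\to 0$; here I use the strong convergence \eqref{main conv}, \eqref{Cauchy conv} established in Section~2 together with \eqref{continuos map}. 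Then $v_m$ converges (weakly, and strongly off the support considerations via \eqref{continuos map}) to a solution $v$ of the damped \emph{linear} equation \eqref{DLP} with $\int_0^T\int_\Omega a(x)|\partial_t v|^2=0$, i.e. $\partial_t v\equiv 0$ on $\omega$; combined with the linear observability \eqref{Linear Obs Ineq} this forces $v\equiv 0$, so the energy of $v_m$ at $t=0$ tends to $0$, contradicting the normalization. The delicate point here is to upgrade the convergence of $v_m$ to something strong enough in the energy norm to pass the normalization to the limit; this is exactly where I would invoke propagation of the semi-classical defect measure: the measure $\mu$ associated to $(v_m)$ is, by the damping hypothesis and the geometric control condition, shown to vanish, which yields the strong convergence $E_{v_m}(0)\to 0$.

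\textbf{Case $\alpha>0$ (the genuinely nonlinear case).} Now $u_{k_m}\to u$ strongly in $C^0([0,T];H^1_{\partial\Omega_D})\cap C^1([0,T];L^2)$ by \eqref{Cauchy conv}, and $u$ solves \eqref{eq:*} on $(0,T)$ with $E_u(0)=\alpha^2>0$, while $\int_0^T\int_\Omega a(x)|\partial_t u|^2 = 0$ by \eqref{damping conv'}; hence $\partial_t u = 0$ a.e. on $\omega\times(0,T)$, so $u$ is time-independent on $\omega$. Differentiating in $t$ and setting $w:=\partial_t u$, $w$ satisfies $\partial_t^2 w-\Delta w + f'(u)w = 0$ with $w=0$ on $\omega$; writing $V(x,t):=f'(u(x,t))\in L^\infty(\Omega\times(0,T))$ (using the subcritical bound $|f'(s)|\lesssim (1+|s|)^{p-1}$ and $u\in C^0([0,T];H^1)$), Assumption \ref{UCP} gives $w\equiv 0$, i.e. $u$ is stationary on all of $\Omega$; then $u$ solves $-\Delta u + f(u)=0$ with the Zaremba conditions, and pairing with $u$ and using the sign condition $f(s)s\ge 0$ forces $\nabla u\equiv 0$, hence $u\equiv 0$ (Poincar\'e, since $\mathrm{meas}(\partial\Omega_D)\ne 0$), so $E_u(0)=0$, contradicting $\alpha>0$.

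\textbf{Main obstacle.} The principal difficulty is the compactness step in the vanishing case: extracting from $(v_m)$ a limit that retains full energy, i.e. proving that no energy escapes either to high frequencies or to the ``uncontrolled'' region. This is handled by the semi-classical defect measure: one shows the measure is a nonnegative Radon measure on $S^*M$, invariant under the (generalized, broken) bicharacteristic flow, supported away from $\{a>0\}$ because $\int a|\partial_t v_m|^2\to 0$, and therefore zero by the geometric control condition — which in turn yields the strong energy convergence and closes the contradiction. The Zaremba boundary condition makes the propagation of this measure across $\partial\Omega_D\cap\partial\Omega_N$ the technically subtle ingredient, but, as announced in the introduction, this is precisely the linear input we are entitled to quote from the earlier work.
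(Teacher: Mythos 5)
Your overall skeleton (contradiction, normalization by $\alpha_m=\sqrt{E(0)}$, a dichotomy on $\lim\alpha_m$, unique continuation to kill the limit) matches the paper's, but two steps do not go through as written. The serious one is in your case $\alpha>0$: you assert that $u_{k_m}\to u$ \emph{strongly} in $C^0([0,T];H^1_{\partial\Omega_D}(\Omega))\cap C^1([0,T];L^2(\Omega))$ ``by \eqref{Cauchy conv}'' and conclude $E_u(0)=\alpha^2>0$. But \eqref{Cauchy conv} was proved for the approximating sequence whose \emph{data converge strongly} (it rests on the Cauchy estimate \eqref{eq:3.60}, which needs $\|u_{1,\mu}-u_{1,\sigma}\|_{L^2}\to 0$ and $\|\nabla u_{0,\mu}-\nabla u_{0,\sigma}\|_{L^2}\to 0$); in the contradiction argument the data are only \emph{bounded}, so you only have weak-$*$ limits, and then $u\equiv 0$ (which UCP does deliver) does not contradict $E_{u_{k_m}}(0)\to\alpha^2>0$, since energy may be lost in the weak limit. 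Recovering that lost compactness is exactly the content of the lemma and cannot be obtained by citing \eqref{Cauchy conv}. The paper organizes the dichotomy differently (on whether the weak limit vanishes, not on $\alpha$): if the weak limit is nonzero, UCP alone gives the contradiction; if it vanishes, the paper proves $E_{v_k^m}(0)\to 0$ \emph{without} any defect-measure propagation, via the Duhamel splitting $v_k^m=y_k^m+z_k^m$, the already-established \emph{linear} observability \eqref{Linear Obs Ineq} applied to the free part $y_k^m$, and the continuity estimate \eqref{continuos map} for $z_k^m$, whose forcing $-\tfrac{1}{\alpha_m}f_k(u_k^m)-a\,\partial_t u_k^m$ tends to $0$ in $L^2$ once $v_k\equiv 0$ is known (using the Lipschitz bound \eqref{Lipschitz} and the strong convergence \eqref{conv3}). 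Your plan to run a semi-classical defect-measure argument directly on the normalized nonlinear sequence is a legitimate alternative, but you defer its key step (propagation across the Dirichlet--Neumann interface for the sequence $v_m$) entirely to the earlier work, which only supplies the linear input; as written this is a missing proof rather than a quoted one.

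A second, reparable error: in the case $\alpha=0$ your linearization is wrong. The forcing $\tfrac{1}{\alpha_m}f(\alpha_m v_m)$ is not $O(\alpha_m^{p-1})$ --- only the Taylor remainder $R(\alpha_m v_m)/\alpha_m$ is --- so it does not vanish in the limit; it converges weakly to $f'(0)v$, and the limit equation is $\partial_t^2 v-\Delta v+f'(0)v=0$ rather than the free damped wave equation, so one must invoke Assumption \ref{UCP} with the potential $V=f'(0)$, as in \eqref{limit2}. Finally, note that the paper runs the contradiction for each \emph{fixed} $k$ (the sequence is indexed by the constant $m$), whereas you let $k_m\to\infty$; your choice is arguably closer to what uniformity of $C$ in $k$ requires, but it means the Section~2 convergences you borrow (e.g.\ \eqref{main conv}) again do not apply verbatim to your contradiction sequence.
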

\begin{proof}
The initial datum $\{u_0,u_1\}\in H^1_{\Omega_D}(\Omega)\times L^2(\Omega)$ in the original problem (\ref{eq:*}) is either zero or not zero.

In the first case, when $\{u_0,u_1\} =(0,0)$ and, observing (\ref{conv init data}), we can consider  $\{u_{0,k},u_{1,k}\} =(0,0)$ for all $k\geq 1$ and the corresponding unique solution to the auxiliary problem (\ref{eq:AP}) will be $u_k \equiv 0$.  Then, (\ref{obs ineq}) is verified.

In the second case, there exists a positive number $R>0$ such that
$$0< ||\{u_0,u_1\}||_{H^1_{\Omega_D}(\Omega)\times L^2(\Omega)} <R,$$
consider, for instance $R=2||\{u_0,u_1\}||_{H^1_{\Omega_D}(\Omega)\times L^2(\Omega)}$.

Therefore, there exists, $k_0 \geq 1$ such that for all $k \geq k_0$, $\{u_{0,k},u_{1,k}\}$ satisfies
\begin{eqnarray}\label{sequencebound}
||\{u_{0,k},u_{1,k}\}||_{H^1_{\Omega_D}(\Omega)\times L^2(\Omega)} <R.
\end{eqnarray}

We are going to prove that under condition (\ref{sequencebound}) on the initial datum, the corresponding solution $u_k$ to (\ref{eq:AP}) satisfies (\ref{obs ineq}).  Our proof relies on contradiction arguments.  So, if (\ref{obs ineq}) is false, then there exists $T>T_0$ such that for every $k\geq 1$ and every constant $C>0$, there exists an initial datum $\{u^C_{0,k},u^C_{1,k}\}$ verifying (\ref{sequencebound}), whose corresponding solution $u^C_k$ violates (\ref{obs ineq}).

In particular, for every $k\geq 1$ and $C=m\in\mathbb{N}$, we obtain the existence of an initial datum $\{u^m_{0,k},u^m_{1,k}\}$ verifying (\ref{sequencebound}) and whose corresponding solution $u^m_k$ satisfies

\begin{eqnarray}\label{false}
E_{u^m_k}(0)> m \int_{0}^{T}\int_\Omega  a(x) |\partial_t u^m_k|^2 \,dxdt.
\end{eqnarray}

Then, we obtain a sequence $\{u_k^m\}_{m\in \mathbb{N}}$ of solutions to problem (\ref{eq:AP}) such that
\begin{eqnarray*}
\lim_{m \rightarrow +\infty} \frac{E_{u_k^m}(0)}{\int_{0}^{T}\int_\Omega a(x) | \partial_t u_k^m|^2\,dxdt}=+\infty.
\end{eqnarray*}

Equivalently
\begin{eqnarray}\label{normal conv}
\lim_{m \rightarrow +\infty}\frac{\int_{0}^{T}\int_\Omega a(x) |\partial_t u_k^m|^2 \,dxdt}{E_{u_k^m}(0)}=0.
\end{eqnarray}

Since $E_{u_k^m}(0)$ is bounded,  (\ref{normal conv}) yields
\begin{eqnarray}\label{conv damp}
\lim_{m \rightarrow +\infty}\int_{0}^{T}\int_\Omega a(x) |\partial_t u_k^m|^2 \,dxdt=0.
\end{eqnarray}

Furthermore, there exists a subsequence of $\{u_k^m\}_{m\in \mathbb{N}}$, still denoted by $\{u_k^m\}$ , verifying the following convergences:
\begin{eqnarray}
&&u_k^m \rightharpoonup u_k \hbox{ weakly-star in } L^{\infty}(0,T;H^1_{\Omega_D}(\Omega)),~\hbox{ as }m\rightarrow +\infty,\label{conv1'}\\
&&\partial_t u_k^m \rightharpoonup \partial_t u_k \hbox{ weakly-star in } L^{\infty}(0,T; L^2(\Omega)),~\hbox{ as }m\rightarrow +\infty,\label{conv2'}\\
&& u_k^m \rightarrow u_k \hbox{ strongly in } L^{\infty} (0,T; L^q(\Omega)), ~\hbox{ as }m\rightarrow +\infty, ~\hbox{ for all } q\in \left[2, \frac{2n}{n-2}\right),\label{conv3'}
\end{eqnarray}
where the last convergence is obtained using Aubin-Lions-Simon Theorem (see \cite{Simon}).  The proof is divided into two distinguished cases: $u_k\ne 0$ and $u_k=0$.

\medskip
Case~(a):~$u_k\ne 0$. ~
\medskip

For $m\in \mathbb{N}$, $u_k^m$ is the solution to the problem
\begin{equation*}
\left\{
\begin{aligned}
&{\partial_t^2 u_k^m -\Delta u_k^m + f_k(u_k^m) + a(x) \partial_t u_k^m = 0\quad \hbox{in}\,\,\,\Omega \times (0, T),}\\\
&{u_k^m=0\quad \hbox{on}\quad \partial \Omega_D \times (0,T),}\\\
&{\partial_\nu u_k^m=0\quad \hbox{on}\quad \partial \Omega_N \times (0,T),}\\\
&{u_k^m(x,0)=u_{0,k}^m(x);\quad \partial_t u_k^m(x,0)=u_{1,k}^m(x),\quad x\in\Omega.}
\end{aligned}
\right.
\end{equation*}

Taking (\ref{conv damp})-(\ref{conv3'}) into consideration we obtain
\begin{equation}\label{limit1'}
\left\{
\begin{aligned}
&{\partial_t^2 u_k -\Delta u_k + f_k(u_k)= 0\quad \hbox{in}\,\,\,\Omega \times (0, T),}\\\
&{u_k=0\quad \hbox{on}\quad \partial \Omega_D \times (0, T ),}\\\
&{\partial_\nu u_k=0\quad \hbox{on}\quad \partial \Omega_N \times (0, T ),}\\\
&{\partial_t u_k}=0 \hbox{ a.e. in }\omega.
\end{aligned}
\right.
\end{equation}

Defining $y_k = \partial_t u_k$, the above problem yields
\begin{equation*}
\left\{
\begin{aligned}
&{\partial_t^2 y_k -\Delta y_k + f_k'(u_k)y_k= 0\quad \hbox{in}\,\,\,\Omega \times (0, +\infty),}\\\
&{y_k=0\quad \hbox{on}\quad \partial \Omega_D \times (0,+\infty ),}\\\
&{\partial_\nu y_k=0\quad \hbox{on}\quad \partial \Omega_N \times (0,+\infty ),}\\\
&{y_k}=0 \hbox{ a.e. in }\omega.
\end{aligned}
\right.
\end{equation*}

Once $ f_k'(u_k)\in L^\infty (\Omega \times (0,T))$ since $f_k$ is globally Lipschitz, for each $k \in m\in \mathbb{N}$, we deduce from Assumption \ref{UCP} that $y_k= \partial_t u_k\equiv 0$.  Returning to (\ref{limit1'}) we conclude that $u_k\equiv 0$ as well and we obtain the desired contradiction.

\medskip
Case~(b):~$u_k= 0$. ~
\medskip

Setting
\begin{eqnarray}\label{def v_k}
\alpha_m := \sqrt{E_{u_k^m}(0)}, ~ \hbox{ and }~v_k^m:= \frac{u_k^m}{\alpha_m},
\end{eqnarray}
in light of (\ref{normal conv}), we obtain
\begin{eqnarray}\label{damping conv}
\lim_{m \rightarrow +\infty} \int_{0}^{T}\int_\Omega a(x) |\partial_t v_k^m|^2\,dxdt=0.
\end{eqnarray}

According to (\ref{def v_k}), the sequence  $\{v_k^m\}_{m\in \mathbb{N}}$ is the solution to the following problem:
\begin{equation}\label{eq:NP}
\left\{
\begin{aligned}
&{\partial_t^2 v_k^m -\Delta v_k^m + \frac{1}{\alpha_m} f_k(u_k^m)  + a(x) \partial_t v_k^m = 0\quad \hbox{in}\,\,\,\Omega \times (0, T),}\\\
&{v_k^m=0\quad \hbox{on}\quad \partial \Omega_D \times (0,T),}\\\
&{\partial_\nu v_k^m=0\quad \hbox{on}\quad \partial \Omega_N \times (0,T),}\\\
&{v_k^m(x,0)=\frac{u_{0,k}^m}{\alpha_m};\quad \partial_t v_k^m(x,0)=\frac{u_{1,k}^m}{\alpha_m}}
\end{aligned}
\right.
\end{equation}
and the associated energy functional is given by
\begin{eqnarray*}
E_{v_k^m}(t) = \frac12 \int_\Omega \left(|\partial_t v_k^m|^2 + |\nabla v_k^m|^2 \right)\,dx + \frac{1}{\alpha_m^2} \int_\Omega F_k(u_k^m)\,dx,
\end{eqnarray*}
since
$$
\frac{1}{\alpha_m}\int_\Omega f_k(u_k^m) \partial_t v_k^m\,dx =\frac{1}{\alpha_m^2} \frac{d}{dt}\int_\Omega F(u_k^m)\,dx.$$

Note that $E_{v_k^m}(t)= \frac{1}{\alpha_m^2} E_{u_k^m}(t)$ for all $t\geq 0$ and, in particular, for $t=0$
\begin{eqnarray}\label{norm initial energy}
E_{v_k^m}(0)= \frac{1}{\alpha_m^2} E_{u_k^m}(0)=1,~\hbox{ for all }m\in \mathbb{N}.
\end{eqnarray}

In order to achieve the contradiction we are going to prove that
\begin{eqnarray}\label{main goal}
\lim_{m\rightarrow +\infty} E_{v_k^m}(0)=0.
\end{eqnarray}

Indeed, initially, we observe that (\ref{norm initial energy}) yields the existence of a subsequence of $\{v_k^m\}_{m\in \mathbb{N}}$, reindexed again by $\{v_k^m\}$, such that
\begin{eqnarray}
&&v_k^m \rightharpoonup v_k \hbox{ weakly-star in } L^{\infty}(0,T; H^1_{\Omega_D}(\Omega)),~\hbox{ as }m\rightarrow +\infty,\label{conv1}\\
&&\partial_t v_k^m \rightharpoonup \partial_t v_k \hbox{ weakly-star in } L^{\infty}(0,T; L^2(\Omega)),~\hbox{ as }m\rightarrow +\infty,\label{conv2}\\
&& v_k^m \rightarrow v_k \hbox{ strongly in } L^{\infty} (0,T; L^q(\Omega)), ~\hbox{ as }m\rightarrow +\infty, ~\hbox{ for all } q\in \left[2, \frac{2n}{n-2}\right).\label{conv3}
\end{eqnarray}

For some eventual subsequence, we have that $\alpha_m \rightarrow \alpha$ with $\alpha\geq0$.

\medskip

If $\alpha>0$, thus, passing to the limit in (\ref{eq:NP}) and considering convergences (\ref{damping conv}), (\ref{conv1}) - (\ref{conv3}), we deduce
\begin{equation}\label{limit1}
\left\{
\begin{aligned}
&{\partial_t^2 v_k -\Delta v_{k} + \frac{1}{\alpha} f_k(u_k) = 0\quad \hbox{ in }\,\,\,\Omega \times (0, T),}\\\
&{v_k=0\quad \hbox{on}\quad \partial \Omega_D \times (0,T),}\\\
&{\partial_\nu v_k=0\quad \hbox{on}\quad \partial \Omega_N \times (0,T),}\\\
&{\partial_t v_k=0 \hbox{ a.e. in }\omega .}
\end{aligned}
\right.
\end{equation}

The above problem yields, for $w_k= \partial_t v_k$, in the distributional sense,
\begin{equation}
\left\{
\begin{aligned}
&{\partial_t^2 w_k -\Delta w_k + \frac{1}{\alpha} f_k'(u_k) w_k = 0\quad \hbox{ in }\,\,\,\Omega \times (0, T),}\\\
&{w_k=0\quad \hbox{on}\quad \partial \Omega_D \times (0, T),}\\\
&{\partial_\nu w_k=0\quad \hbox{on}\quad \partial \Omega_N \times (0, T),}\\\
&{w_k=0 \hbox{ a.e. in }\omega.}
\end{aligned}
\right.
\end{equation}

Once $\frac{1}{\alpha} f_k'(u_k)\in L^\infty (\Omega \times (0,T))$, using again Assumption (\ref{UCP})  we conclude that $w_k= \partial_t v_k\equiv 0$, and, therefore, returning to (\ref{limit1}) we deduce that $v_k\equiv 0$.

\medskip

If $\alpha=0$, first, observe that hypothesis (\ref{Lipschitz}) yields
\begin{eqnarray*}
\frac{1}{\alpha_m^2} |f_k(u_k^m)|^2 \leq c \frac{1}{\alpha_m^2} |u_k^m|^2 =c \frac{1}{\alpha_m^2}\alpha_m^2 |v_k^m|^2,
\end{eqnarray*}
and
\begin{eqnarray}\label{crucial bound'}
\frac{1}{\alpha_m^2} \int_0^T \int_\Omega |f_k(u_k^m)|^2 \,dxdt \leq  c \int_0^T\int_\Omega |v_k^m|^2\,dxdt.
\end{eqnarray}

We are going to prove that
\begin{equation}\label{nl1}
\frac{1}{\alpha_m}f_k(\alpha_m v_k^m) \rightharpoonup f'(0)v_k \hbox{ in } L^2(0,T;L^2(\Omega)) \hbox{ as } m \rightarrow \infty.
\end{equation}

Since
\begin{equation*}
\frac{1}{\alpha_m}f_k(\alpha_m v_k^m) - f'(0)v_k=\frac{1}{\alpha_m}f_k(\alpha_m v_k^m)-\frac{1}{\alpha_m}f(\alpha_m v_k^m)+\frac{1}{\alpha_m}f(\alpha_m v_k^m) - f'(0)v_k,
\end{equation*}
if we prove that
\begin{equation}\label{nl2}
\frac{1}{\alpha_m}f_k(\alpha_m v_k^m)-\frac{1}{\alpha_m}f(\alpha_m v_k^m) \rightarrow 0 \hbox{ in } L^2(0,T;L^2(\Omega))
\end{equation}
and
\begin{equation}\label{nl3}
\frac{1}{\alpha_m}f(\alpha_m v_k^m) - f'(0)v_k \rightharpoonup 0
\hbox{ in } L^2(0,T;L^2(\Omega)),
\end{equation}
as $m \rightarrow \infty$, we prove (\ref{nl1}).

To prove (\ref{nl2}), let's consider $$\Omega_m^t=\{x \in \Omega: |u_k^m(x,t)|>k\}.$$

Employing definition (\ref{trunc func}), $|f_k(\alpha_m v_k^m)- f(\alpha_m v_k^m)|=0$ in $\Omega \setminus \Omega_m^t$.  Then, hypotheses (\ref{main ass on f}) and (\ref{ass on f}) yield
\begin{small}
\begin{align*}
&\left\| \frac{1}{\alpha_m}f_k(\alpha_m v_k^m)-\frac{1}{\alpha_m}f(\alpha_m v_k^m) \right\|_{L^2(0,T;L^2(\Omega))}^2 \\
= {}& \int_{0}^{T}\int_{\Omega_m^t} \left|\frac{1}{\alpha_m}f_k(\alpha_m v_k^m)-\frac{1}{\alpha_m}f(\alpha_m v_k^m) \right|^2 \, dxdt\\
= {}&\frac{1}{\alpha_m^2}\int_{0}^{T}\int_{\Omega_m^t} \left|f_k(\alpha_m v_k^m)-f(\alpha_m v_k^m) \right|^2 \, dxdt\\
\lesssim {}& \frac{1}{\alpha_m^2}\int_{0}^{T}\int_{\Omega_m^t}|f_k(\alpha_m v_k^m )|^2\, dx dt+\frac{1}{\alpha_m^2}\int_{0}^{T}\int_{\Omega_m^t} |f(\alpha_m v_k^m)|^2 \, dx dt \\
\lesssim {}& \int_{0}^{T}\int_{\Omega_m^t} |f(k)|^2+|f(-k)|^2 \, dx dt+\frac{1}{\alpha_m^2}\int_{0}^{T}\int_{\Omega_m^t} |\alpha_m v_k^m|^2 + |\alpha_m v_k^m|^{2p} \, dx dt\\
\lesssim {}& \int_{0}^{T}\int_{\Omega_m^t} |k|^2+|k|^{2p} \, dx dt+\frac{1}{\alpha_m^2}\int_{0}^{T}\int_{\Omega_m^t} |\alpha_m v_k^m|^2 + |\alpha_m v_k^m|^{2p} \, dx dt\\
\lesssim {}& \frac{1}{\alpha_m^2}\int_{0}^{T}\int_{\Omega_m^t} |u_k^m|^2 + |u_k^m|^{2p} \, dx dt+\frac{1}{\alpha_m^2}\int_{0}^{T}\int_{\Omega_m^t} |\alpha_m v_k^m|^2 + |\alpha_m v_k^m|^{2p} \, dx dt.
\end{align*}
\end{small}

Since $p>1$, $k\geq 1$ and $ k <|u_k^m|=|\alpha_m v_k^m|$ in $\Omega_m^t$, we obtain
\begin{small}
\begin{align*}
\left\| \frac{1}{\alpha_m}f_k(\alpha_m v_k^m)-\frac{1}{\alpha_m}f(\alpha_m v_k^m) \right\|_{L^2(0,T;L^2(\Omega))}^2
\lesssim {}& \frac{1}{\alpha_m^2}\int_{0}^{T}\int_{\Omega_m^t} |\alpha_m v_k^m|^{2p} \, dx dt\\
\lesssim {}& \alpha_m^{2(p-1)} \|v_k^m\|_{L^{2p}(0,T;L^{2p}(\Omega))}^{2p}\rightarrow 0, \hbox{ as } m \rightarrow \infty,
\end{align*}
\end{small}
which proves the convergence \eqref{nl2}.

On the other hand, $f\in C^2(\mathbb{R})$ and, consequently, from Taylor's Theorem and (\ref{main ass on f}) we have
\begin{eqnarray}\label{Taylor}
f(s) = f'(0) s + R(s), \hbox{ where } |R(s)|\leq C(|s|^2 + |s|^p).
\end{eqnarray}

Hence
\begin{eqnarray}\label{ident}
&&\frac{1}{\alpha_m}f(\alpha_m v_k^m)= f'(0) v_k^m + \frac{R(\alpha_m v_k^m)}{\alpha_m}\label{T1}
\end{eqnarray}
and
\begin{equation}\label{T2}
 \left|\frac{R(\alpha_m v_k^m)}{\alpha_m}\right| \leq C \left(\alpha_m|v_k^m|^2 + |\alpha_m|^{p-1} |v_k^m|^p\right).
\end{equation}

In light of identity (\ref{Taylor}), we establish $ \frac{R(\alpha_m v_k^m)}{\alpha_m}=\frac{f(\alpha_m v_k^m)}{\alpha_m}-f'(0)v_k^m$ and hypotheses (\ref{main ass on f}) and (\ref{ass on f}) imply that  $|f(\alpha_m v_k^m)| \lesssim |\alpha_m v_k^m|+|\alpha_m v_k^m|^p$.  Then, we deduce that
\begin{equation*}
\left\| \frac{R(\alpha_m v_k^m)}{\alpha_m} \right\|_{L^2(0,T;L^2(\Omega))}^2 \lesssim \|v_k^m\|_{L^2(0,T;L^2(\Omega))}^2+|\alpha_m|^{2(p-1)}\|v_k^m\|_{L^{2p}(0,T;L^{2p}(\Omega))}^{2p} \leq C,
\end{equation*}
for some constant $C>0$.   We obtain a subsequence of $\frac{R(\alpha_m v_k^m)}{\alpha_m}$ and $\gamma \in L^2(0,T;L^2(\Omega))$ such that
\begin{equation}\label{resto1}
\frac{R(\alpha_m v_k^m)}{\alpha_m} \rightharpoonup \gamma \hbox{ in } L^2(0,T;L^2(\Omega)).
\end{equation}

Besides, employing inequality (\ref{T2}) and observing (\ref{tower}), we get
\begin{eqnarray}\label{resto2}
\left|\left| \frac{R(\alpha_m v_k^m)}{\alpha_m}\right|\right|_{L^1(0,T;L^1(\Omega))}&\lesssim& \int_0^T \int_\Omega \alpha_m |v_k^m|^2 \,dxdt + \int_0^T\int_\Omega \alpha_m^{p-1} |v_k^m|^{p}\,dxdt \nonumber\\
&=& \alpha_m\int_{0}^{T}\|v_k^m\|_{L^2(\Omega)}^2 \, dt+\alpha_m^{p-1}\int_{0}^{T}\|v_k^m\|_{L^p(\Omega)}^{p}\, dt \nonumber \\
&=&  \alpha_m ||v_k^m||_{L^2(0,T; L^2(\Omega))}^2 + \alpha_m^{p-1}||v_k^m||_{L^p(0,T;L^{p}(\Omega))}^{p}\rightarrow 0.
\end{eqnarray}

From \eqref{resto1} and \eqref{resto2} we conclude that
\begin{equation}\label{resto3}
\frac{R(\alpha_m v_k^m)}{\alpha_m} \rightharpoonup 0 \hbox{ in } L^2(0,T;L^2(\Omega)).
\end{equation}

Observing \eqref{conv3}, \eqref{ident} and \eqref{resto3}, the convergence \eqref{nl3} is proved.

\begin{Remark}
	The case $p=1$ is trivially contemplated once the truncation is not necessary.
\end{Remark}

Since convergences (\ref{nl2}) and (\ref{nl3}) are proved, we conclude convergence (\ref{nl1}).

Passing to the limit in (\ref{eq:NP}) as $m\rightarrow +\infty$, we obtain
\begin{equation}\label{limit2}
\left\{
\begin{aligned}
&{\partial_t^2 v_k -\Delta v_k + f'(0) v_k  = 0\quad \hbox{ in }\,\,\,\Omega \times (0, T),}\\\
&{v_k=0\quad \hbox{on}\quad \partial \Omega_D \times (0, T),}\\\
&{v_k=0\quad \hbox{on}\quad \partial \Omega_N \times (0, T),}\\\
&{\partial_t v_k=0 \hbox{ a.e. in }\omega,}
\end{aligned}
\right.
\end{equation}
and defining $w_k= \partial_t v_k$, it satisfies the following problem:
\begin{equation}
\left\{
\begin{aligned}
&{\partial_t^2 w_k -\Delta w_k + f'(0) w_k = 0\quad \hbox{ in }\,\,\,\Omega \times (0, T),}\\\
&{w_k=0\quad \hbox{on}\quad \partial \Omega_D \times (0, T),}\\\
&{w_k=0\quad \hbox{on}\quad \partial \Omega_N \times (0, T),}\\\
&{w_k=0 \hbox{ a.e. in }\omega.}
\end{aligned}
\right.
\end{equation}

Using Assumption (\ref{UCP}) we obtain that $w_k= \partial_t v_k\equiv 0$ and returning to \eqref{limit2} we deduce that $v_k\equiv 0$.

Then, in both cases $\alpha =0$ and $\alpha >0$, we obtain that $v_k\equiv 0$.  Consequently,  inequality (\ref{crucial bound'}) and convergence (\ref{conv3}) yield that
\begin{eqnarray}\label{crucial bound}
\frac{1}{\alpha_m^2} \int_0^T \int_\Omega |f_k(u_k^m)|^2 \,dxdt \rightarrow 0 \hbox{ in } L^2(0,T,L^2(\Omega))~\hbox{ as }m\rightarrow +\infty.
\end{eqnarray}

In order to achieve a contradiction we need to prove that $ E_{v_k^m}(0)\rightarrow 0$ as $m \rightarrow +\infty$.  In fact, from (\ref{eq:NP}), we can write $v_k^m = y_k^m + z_k^m$ such that $y_k^m$ and $v_k^m$ are, respectively, solutions of the following problems:
\begin{equation*}
\left\{
\begin{aligned}
& \partial_t y_k^m - \Delta y_k^m = 0~\hbox{ in }~\Omega \times (0,T),\\
& y_k^m=0 ~\hbox{ on } \partial \Omega_D \times (0,T),\\
& \partial_\nu y_k^m=0 ~\hbox{ on } \partial \Omega_N \times (0,T),\\
& y_k^m(0)=v_k^m(0),~\partial_t y_k^m(0)=\partial_t v_k^m(0),
\end{aligned}
\right.
\end{equation*}
and
\begin{equation*}
\left\{
\begin{aligned}
& \partial_t z_k^m - \Delta z_k^m =  - \frac{1}{\alpha_m} f_k(u_k^m) + a(x) \partial_t u_k^m ~\hbox{ in }~\Omega \times (0,T),\\
& z_k^m=0 ~\hbox{ on } \partial \Omega_D \times (0,T),\\
& \partial_\nu z_k^m=0 ~\hbox{ on } \partial \Omega_N \times (0,T),\\
& z_k^m(0)=0,~\partial_t z_k^m(0)=0.
\end{aligned}
\right.
\end{equation*}

Setting
\begin{eqnarray*}
E_{v_k^m}^L(t) := \int_\Omega \left( |\partial_t v_k^m(x,t)|^2 + |\nabla v_k^m(x,t)|^2\right)\,dx,
\end{eqnarray*}
the linear part associated with energy $E_{v_k^m}(t)$, then we can write
\begin{eqnarray}\label{energy decomposition}
E_{v_k^m}(t)= E_{v_k^m}^L(t) + \frac{1}{\alpha_m^2} \int_\Omega F_k(u_k((x,t)))\,dx.
\end{eqnarray}

In the sequel, let us estimates the nonlinear term of the RHS of (\ref{energy decomposition}) in terms of $E_{v^m}^L(t)$.

\medskip
\noindent{Estimate for $I_1:=\frac{1}{\alpha_m^2} \int_\Omega F_k(u_k(x,t))\,dx$.}
\medskip

Taking (\ref{ass on F}) into account, one has
\begin{eqnarray*}
|I_1|&\leq&  \frac{1}{\alpha_m^2} \int_\Omega \left[ |u_k^m|^2 + |u_k^m|^{p+1}\right]\,dx\\
&=&  \frac{1}{\alpha_m^2} \left[ ||u_k^m(t)||_{L^2(\Omega)}^2 +  ||u_k^m(t)||_{L^{p+1}(\Omega)}^{p+1}\right].
\end{eqnarray*}

If $p=1$, it follows, in  view of (\ref{def v_k}), that
\begin{eqnarray*}
|I_1|\leq \frac{2}{\alpha_m^2}||u^m(t)||_{L^2(\Omega)}^2 = 2||v_k^m(t)||_{L^2(\Omega)}^2 \lesssim E_{v_k^m}^L(t).
\end{eqnarray*}

Now, if $p>1$ then $p+1>2$ and, therefore $p+1=2+\varepsilon$ for some $\varepsilon>0$, Thus, having in mind that the map $t \mapsto E_{v^m}$ is non increasing and $E_{v^m}(0)=1$, we infer
\begin{eqnarray*}
|I_1| &\leq& \left[ ||v_k^m(t)||_{L^2(\Omega)}^2 + \alpha_m^{p-1} ||v_k^m(t)||_{L^2(\Omega)}^{p+1}\right]\\
&=& \left[ ||v_k^m(t)||_{L^2(\Omega)}^2 + \alpha_m^{p-1} ||v_k^m(t)||_{L^2(\Omega)}^{2+\varepsilon}\right]\\
&\lesssim& E_{v_k^m}^L(t) + \alpha_m^{p-1} [E_{v_k^m}^L(t)] [E_{v_k^m}(0)]^{\frac{\varepsilon}{2}}\\
&\lesssim& [1 + \alpha_m^{p-1}]E_{v_k^m}^L(t).
\end{eqnarray*}

In any case, we deduce
\begin{eqnarray}\label{bound 1}
|I_1| \lesssim E_{v_k^m}^L(t).
\end{eqnarray}

So, combining (\ref{energy decomposition}) and (\ref{bound 1}) we obtain
\begin{eqnarray}\label{bound 3}
E_{v_k^m}(t) \lesssim E_{v_k^m}^L(t),~\hbox{ for all }t\in[0,T].
\end{eqnarray}

Now, employing the observability given in (\ref{Linear Obs Ineq}) and having  in mind that $E_{v_k^m}^L(0)\equiv E_{y_k^m}(0)$, we deduce from (\ref{bound 3}) that
\begin{eqnarray}\label{bound 4}
E_{v_k^m}(0) &\lesssim& E_{v_k^m}^L(0)= E_{y_k^m}(0)\leq c \int_0^T \int_\omega |\partial_t y_k^m(x,t)|^2\,dxdt.
\end{eqnarray}

From (\ref{bound 4}), observing that $a(x) \geq a_0>0$ in $\omega$ and since $v_k^m = y_k^m + z_k^m$, we obtain
\begin{eqnarray}\label{bound 5}
E_{v_k^m}(0)  &\lesssim& \int_0^T\int_\Omega a(x) |\partial_t v_k^m(x,t)|^2\,dxdt + \int_0^T\int_\Omega |\partial_t z_k^m(x,t)|^2\,dxdt.
\end{eqnarray}

On the other hand, using the well-known result which establishes that the map $\{z_0, z_1, f\}\mapsto \{z, \partial_t z\}\in L^\infty(0,T; H^1_{\partial \Omega_D}(\Omega))\times L^\infty(0,T;L^2(\Omega))$ associating the initial data $\{z_0, z_1, f\}\in H^1_{\partial \Omega_D}(\Omega) \times L^2(\Omega) \times L^1(0,T;L^2(\Omega))$ to the unique solution to the linear problem
	\begin{equation}\label{aux prob2}
	\left\{
	\begin{aligned}
	& \partial_t^2 z  - \Delta z = f~\hbox{ in }\Omega\times (0,T)\\
    &z=0 ~\hbox{ on } \partial \Omega_D \times (0,T),\\
    &\partial_\nu z=0 ~\hbox{ on } \partial \Omega_N \times (0,T),\\
	&z(0)=z_0,~\partial_t z(0)=z_1
	\end{aligned}
	\right.
	\end{equation}
	is linear and continuous; we obtain, from (\ref{bound 5}), and, in particular, considering $z_0=z_1=0$ and $f:=- \frac{1}{\alpha_m} f_k(u_k^m) -a(x) \partial_t u_k^m$, that
\begin{eqnarray}\label{bound 6}
E_{v_k^m}(0)  &\lesssim& \int_0^T\int_\Omega a(x) |\partial_t v_k^m(x,t)|^2\,dxdt
+  \frac{1}{\alpha_m^2} \int_0^T \int_\Omega|f_k(u_k^m)|^2  \,dxdt.
\end{eqnarray}

 Thus, from (\ref{damping conv}), (\ref{crucial bound}) and (\ref{bound 6}) we deduce that $E_{v_k^m}(0) \rightarrow 0$ as $m\rightarrow +\infty$ as desire to prove in (\ref{main goal}).

\end{proof}

\medskip

In what follows, we are going to conclude the exponential stability to the problem (\ref{eq:*}).

Thanks to inequality (\ref{obs ineq}), the auxiliary problem (\ref{eq:AP}) satisfies the following observability inequality:
\begin{equation}\label{trunc observ ineq}
E_{u_k}(0) \leq C\,\int_0^{T}\int_{\Omega} a(x) |\partial_t u_k|^2\,dx\,dt,\hbox{ for all } T\geq T_0, \hbox{ and } k\in \mathbb{N},~ k \geq k_0,
\end{equation}
where $C$ is a positive constant which does not depend on $k\in \mathbb{\mathbb{N}}$.

Passing to the limit as $k\rightarrow +\infty$ and observing convergences  (\ref{Cauchy conv}), (\ref{damping conv'}) and (\ref{main strong conv}), the above inequality yields  the observability inequality associated to the original problem (\ref{eq:*}), that is,
\begin{equation}
\label{final obs_inequality}
E_{u}(0) \leq C\,\int_0^{T}\int_{\Omega} a(x) | \partial_t u|^2 \,dx\,dt, \hbox{ for all } T\geq T_0.
\end{equation}	 	 	

On the other hand, passing to the limit as $k\rightarrow +\infty$ and considering the same convergences  (\ref{Cauchy conv}), (\ref{damping conv'}) and (\ref{main strong conv}), identity (\ref{est2}) yields the identity associated to the original problem (\ref{eq:*}), namely,
\begin{eqnarray}\label{final energy ident}\quad
E_u(t_2) - E_u (t_1) + \int_{t_1}^{t_2} \int_\Omega  a(x) |\partial_t u|^2 \,dx\,dt=0,~\hbox{ for all }0\leq t_1 < t_2<+\infty.
\end{eqnarray}

Gathering together (\ref{final obs_inequality}), (\ref{final energy ident}), and since the map $t \mapsto E_{u}(t)$ is a non-increasing function, we obtain
\begin{equation}
\begin{aligned}
E_{u}(T_0)&\leq C\,\int_0^{T_0}\int_{\Omega}\left(a(x) |\partial_t u|^2\right)\,dx\,dt\\&=C\,\left(E_{u}(0)-E_{u}(T_0)\right),
\end{aligned}
\end{equation}
that is,
\begin{eqnarray}
\label{inequality}E_{u}(T_0)&\leq&\left(\frac{C}{1+C}\right)\,E_{u}(0).
\end{eqnarray}

Repeating the same steps for $m T_0$, $m\in \mathbb{N}, m\geq 1$, we deduce
$$E_{u}(m T_0)\leq\frac{1}{(1+\hat{C})^m}E_{u}(0),$$
where $\hat{C}=C^{-1}$. Consider $t\geq T_0$ and $t=m T_0+r,$ $0\leq r<T_0$. Thus,
$$E_{u}(t)\leq E_{u}(t-r)=E_{u}(m T_0)\leq \frac{1}{(1+\hat{C})^m}\,E_{u}(0)=\frac{1}{(1+\hat{C})^{\frac{t-r}{T_0}}}E_{u}(0).$$
	 	
Defining $\displaystyle C:=\textrm{e}^{\frac{r}{T_0}\ln(1+\hat{C})}$ and $\lambda_0:=\frac{\ln(1+\hat{C})}{T_0}>0,$ we obtain
\begin{equation}\label{exp'}
E_{u}(t)\leq C \,\textrm{e}^{- \lambda_0 t}E_{u}(0) \hbox{ for all } t\geq
T_0,
\end{equation}
which proves the exponential decay  to problem (\ref{eq:*}) and we prove the following result.

\begin{Theorem}\label{theo 3}
Under the assumptions of Theorem \ref{theo 2} and Assumptions \ref{ass 1} and \ref{UCP}  there exist positive constants $C$ and $\gamma$ such that the following exponential decay holds
\begin{equation}\label{exp}
	 	 	E_{u}(t)\leq C \,\textrm{e}^{- \lambda_0 t}E_{u}(0), \hbox{ for all } t\geq T_0.
\end{equation}
for every solution to problem (\ref{eq:*}), provided that the initial data are taken in bounded sets of the phase-space $\mathcal{H}:= H^1_{\partial \Omega_D}(\Omega) \times L^2(\Omega)$.
\end{Theorem}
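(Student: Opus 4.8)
The plan is to assemble the ingredients already prepared in the preceding sections and then run a standard stabilization iteration. The backbone is the uniform-in-$k$ observability inequality of the Lemma, together with the global well-posedness and energy identity of Theorem \ref{theo 2}, the linear observability \eqref{Linear Obs Ineq} (equivalently, the exponential decay of \eqref{DLP} via Haraux), and the continuous dependence estimate \eqref{continuos map}.

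First I would invoke Theorem \ref{theo 2}: for data $\{u_0,u_1\}$ in a fixed ball of $\mathcal{H}$, problem \eqref{eq:*} has a unique global solution $u$ in the stated class, satisfying the energy identity. Since the truncation threshold $k_0$ in the Lemma depends only on the radius $R$ of that ball, I may work with the approximating sequence $\{u_k\}_{k\ge k_0}$ of solutions to \eqref{Aux Prob} whose initial data converge as in \eqref{conv init data} and stay in the ball of radius $R$; the Lemma then gives \eqref{trunc observ ineq}, i.e. the observability inequality for \eqref{Aux Prob} with a constant $C$ independent of $k$.

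Second, the core step is the proof of the Lemma, which I would carry out by contradiction exactly as above: normalize a putative maximizing sequence $v_k^m=u_k^m/\sqrt{E_{u_k^m}(0)}$, extract weak-$\ast$ limits and, using Aubin--Lions--Simon, a strong $L^q$-limit $v_k$, and identify the limiting equation. The nonlinear term is controlled through the global Lipschitz bound on $f_k$; in the case $\alpha=\lim\alpha_m>0$ it passes to the limit directly, while in the delicate case $\alpha=0$ one shows $\alpha_m^{-1}f_k(\alpha_m v_k^m)\rightharpoonup f'(0)v_k$ by splitting off the truncation error, which is $O(\alpha_m^{p-1})$ on $\{|u_k^m|>k\}$, and Taylor-expanding $f$ at $0$. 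The limit $v_k$ then solves a linear wave equation with a bounded potential, with $v_k$ (or $\partial_t v_k$) vanishing on $\omega$, so Assumption \ref{UCP} applied to $w_k=\partial_t v_k$ forces $v_k\equiv 0$; feeding this into the decomposition $v_k^m=y_k^m+z_k^m$, using the linear observability \eqref{Linear Obs Ineq} for $y_k^m$ and the continuity estimate \eqref{continuos map} for $z_k^m$ (driven by $-\alpha_m^{-1}f_k(u_k^m)-a\,\partial_t u_k^m$), one gets $E_{v_k^m}(0)\to 0$, contradicting $E_{v_k^m}(0)=1$.

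Third, I would pass to the limit $k\to\infty$ in \eqref{trunc observ ineq} and in \eqref{est2}, using the convergences \eqref{Cauchy conv}, \eqref{damping conv'}, \eqref{main strong conv} from Section 2, to obtain the observability inequality \eqref{final obs_inequality} and the energy identity \eqref{final energy ident} for \eqref{eq:*}. Finally, combining these with the monotonicity of $t\mapsto E_u(t)$ yields $E_u(T_0)\le \frac{C}{1+C}E_u(0)$ as in \eqref{inequality}; iterating over $[mT_0,(m+1)T_0]$ and interpolating for $t=mT_0+r$ with $0\le r<T_0$ gives the exponential decay \eqref{exp} with $\lambda_0=\ln(1+C^{-1})/T_0$. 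The hard part is the Lemma — specifically, showing that the normalized nonlinear terms do not obstruct the passage to the limit and that the limit equation is precisely a linear wave equation to which Assumption \ref{UCP} applies; everything after that is bookkeeping.
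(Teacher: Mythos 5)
Your proposal follows essentially the same route as the paper: the uniform observability lemma proved by contradiction (normalization $v_k^m=u_k^m/\alpha_m$, identification of the limit equation with the Taylor/truncation splitting in the case $\alpha_m\to 0$, Assumption \ref{UCP}, and the decomposition $v_k^m=y_k^m+z_k^m$ combined with \eqref{Linear Obs Ineq} and \eqref{continuos map} to force $E_{v_k^m}(0)\to 0$), followed by the passage to the limit in $k$ via \eqref{Cauchy conv}, \eqref{damping conv'}, \eqref{main strong conv} and the standard iteration $E_u(T_0)\le \tfrac{C}{1+C}E_u(0)$. The only presentational difference is that the paper first treats separately the case where the un-normalized weak limit $u_k$ is nonzero (no normalization needed, UCP applied directly to $\partial_t u_k$), but your normalized argument covers the same ground.
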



\begin{thebibliography}{}

\end{thebibliography}


\begin{thebibliography}{99}
\bibliographystyle{plain}

 \bibitem{alabau:05}
F.~Alabau-Boussouira.
\newblock Convexity and weighted integral inequalities for energy decay rates
  of nonlinear dissipative hyperbolic systems.
\newblock Appl. Math. Optim., \textbf{51}(1):61--105, 2005.

\bibitem{ademirmaria} M. Astudillo, M. M. Cavalcanti, V, N. Domingos Cavalcanti, R. Fukuoka, A. B. Pampu.
Uniform Decay Rates estimates for the semilinear wave equation in inhomogeneous media
with locally distributed nonlinear damping, {\em Nonlinearity}, 31. (2018),
pp. 4031-4064.


\bibitem{Bardos}%
  C. Bardos, G. Lebeau, and J. Rauch,
  Sharp sufficient conditions for the observation, control, and stabilization of waves from the boundary,
   SIAM J. Control Optim. \textbf{30}, 1024-1065 (1992).


\bibitem{Brezis}
H. Brezis, Functional Analysis, Sobolev Spaces and Partial Differential Equations. Universitext.
Springer, New York (2011).


\bibitem{Burq-Gerard-CR}
  N. Burq, and P. G\'erard,
  Condition n\'ecessaire et suffisante pour la contr\^olabilit\'e exacte des ondes. (French) [A necessary and sufficient condition for the exact controllability of the wave equation],
  C. R. Acad. Sci. Paris S\'er. I Math. \textbf{325}, 749-752 (1997).


\bibitem{Cavalcanti2}
  M.\,M. Cavalcanti, V.\,N. Domingos Cavalcanti, R. Fukuoka, and J.\,A. Soriano,
  Asymptotic stability of the wave equation on compact manifolds and locally distributed damping: a sharp result,
  Arch. Ration. Mech. Anal. \textbf{197}, 925-964 (2010).

\bibitem{Cavalcanti3}
  M.\,M. Cavalcanti, V.\,N. Domingos Cavalcanti, R. Fukuoka and J. A. Soriano,
  Asymptotic stability of the wave equation on compact surfaces and locally distributed damping-a sharp result,
  Trans. Amer. Math. Soc. \textbf{361}, 4561-4580 (2009).

 \bibitem{Cornilleau} P. Cornilleau and L. Robbiano,  Carleman estimates for the Zaremba boundary condition and stabilization of waves. Amer. J. Math. 136 (2014), no.  \textbf{2}, 393-444.

\bibitem{Dehman0}
  B. Dehman,
  Stabilisation pour l'\'equation des ondes semilin\'eaire,
  Asymptotic Anal. \textbf{27}, 171-181 (2001).

\bibitem{Dehman2}
  B. Dehman, G. Lebeau, and E. Zuazua,
  Stabilization and control for the subcritical semilinear wave equation,
  Anna. Sci. Ec. Norm. Super. \textbf{36}, 525-551 (2003).

\bibitem{DZZ}
 T. Duyckaerts, X. Zhang and E. Zuazua, On the optimality of the observability inequalities for parabolic and hyperbolic systems with potentials, Ann. Inst. H. Poincaré Anal. Non Linéaire \textbf{25}, 1-41 (2008).

\bibitem{Gerad}
  P. G\'erard,
  Microlocal defect measures,
  Comm. Partial Differential Equations \textbf{16}, 1761-1794 (1991).

\bibitem{Gerard-JFA}
  P. G\'erard,
  Oscillations and concentration effects in semilinear dispersive wave equations,
  J. Funct. Anal. \textbf{141}, 60-98 (1996).

 \bibitem{Haraux} A. Haraux,  Une remarque sur la stabilisation de certains systèmes du deuxième ordre en temps. (French) [A remark on the stabilization of certain systems of second order in time] Portugal. Math. 46 (1989), no. \textbf{3}, 245-258.

\bibitem{Haraux2} A. Haraux,  Stabilization of trajectories for some weakly damped hyperbolic equations,
  J. Differential Equations \textbf{59}, 145-154 (1985).

\bibitem{Joly}
  R. Joly, and C. Laurent,
  Stabilization for the semilinear wave equation with geometric control,
  Analysis $\&$ PDE \textbf{6}, 1089-1119 (2013).

\bibitem{Huang} F. L. Huang, Characteristic conditions for exponential stability of linear dynamical
systems in Hilbert spaces, Ann. Differential Equations \textbf{1} (1985) 43–56.


  \bibitem{Lasiecka-Tataru} {I. Lasiecka and D. Tataru}, Uniform boundary
stabilization of semilinear wave equation with nonlinear boundary
damping, Differential and integral Equations,  \textbf{6} (1993), 507-533.


\bibitem{Laurent1}
   C. Laurent,
   On stabilization and control for the critical Klein Gordon equation on 3-D
compact manifolds,
   Journal of Functional Analysis \textbf{260}, 1304-1368 (2011).

\bibitem{Lebeau}
   G. Lebeau,
   Equations des ondes amorties,
   Algebraic Geometric Methods in Maths. Physics  73-109 (1996).

\bibitem{Lions1} J.L. Lions, Quelques Meth\'odes de Resolution des Probl\'ems aux limites Non Line\'eires, 1969.

\bibitem{Lions-Magenes} J. L. Lions and E. Magenes. Probl\'emes aux Limites non Homog\`enes,
Aplications, Dunod, Paris, 1968.


  \bibitem{martinez:99:RMC}
P.~Martinez.
\newblock A new method to obtain decay rate estimates for dissipative systems
  with localized damping.
\newblock Rev. Mat. Complut., \textbf{12}(1):251--283, 1999.


\bibitem {Miller}
   L. Miller.
   Escape function conditions for the observation, control,
and stabilization of the wave equation,
   SIAM J. Control Optim. \textbf{41}, 1554-1566 (2002).

\bibitem{Nakao}
   M. Nakao,
   Energy decay for the linear and semilinear wave equations in exterior domains with some
localized dissipations,
   Math. Z. \textbf{4} 781-797 (2001).

\bibitem{Pazy}
   A. Pazy,
   Semigroups of Linear Operators and Applications to Partial Differential Equations,
   Applied Mathematical Sciences 44 (Springer-Verlag, New York, 1983).

\bibitem{RT}
   J. Rauch and M. Taylor,
   Decay of solutions to nondissipative hyperbolic systems on compact manifolds,
   Comm. Pure Appl. Math. \textbf{28} 501-523 (1975).

\bibitem{Ruiz}
   A. Ruiz,
   Unique Continuation for Weak Solutions of the Wave Equation  plus a Potential,
   J. Math. Pures. Appl. \textbf{71} 455-467 (1992).


\bibitem{Simon}
   J. Simon,
   Compact Sets in the space $L^p(0, T; B)$,
   Ann. Mat. Pura Appl. \textbf{146} 65-96 (1987).

   \bibitem{Strauss} W. A. Strauss, On weak solutions of semilinear hyperbolic equations, Anais da Academis Brasileira de Ci\^encias, \textbf{71}, 1972, 645-651.

 \bibitem{Tebeou-localized} L. Tebou, Stabilization of the wave equation with localized nonlinear damping. Journal of Differential Equations 1998; \textbf{145}:502-524.

\bibitem{Tebou}
   L. Tebou,
   Stabilization of some elastodynamic systems with localized Kelvin-Voigt damping,
   Discrete and Continuous Dynamical Systems, \textbf{36} 7117-7136 (2016).

\bibitem{toundykov:07}
D.~Toundykov.
\newblock Optimal decay rates for solutions of a nonlinear wave equation with
  localized nonlinear dissipation of unrestricted growth and critical exponent
  source terms under mixed boundary conditions.
\newblock  Nonlinear Anal., \textbf{67}(2):512--544, 2007.

\bibitem{Triggiani} R. Triggiani and P. F. Yao, Carleman estimates with no lower-order terms for general Riemann wave equations. Global uniqueness and observability in one shot. Special issue dedicated to the memory of Jacques-Louis Lions. Appl. Math. Optim. \textbf{46} (2002), no. 2-3, 331–375.


\bibitem{Zuazua}
   E. Zuazua,
   Exponential decay for semilinear wave equations with localized damping,
   Comm. Partial Differential Equations,  \textbf{15} 205-235 (1990).

\end{thebibliography}
\end{document}